\def\utr{\, \underline{\triangleright}\, }
\def\otr{\, \overline{\triangleright}\, }
\def\utrd{\, \underline{\triangledown}\, }
\def\otrd{\, \overline{\triangledown}\, }
\def\B{\mathcal{B}}
\def\Aut{\operatorname{Aut}}
\def\Sym{\operatorname{Sym}}
\newtheorem{theorem}{Theorem}
\newtheorem{lemma}[theorem]{Lemma}
\newtheorem{corollary}[theorem]{Corollary}
\theoremstyle{definition}
\newtheorem{example}{Example}
\newtheorem{definition}{Definition}
\newtheorem{remark}{Remark}
\date{}
\title{\Large \textbf{Biracks and Switch Braid Quivers}}
\author{Max Chao-Haft\footnote{Email: mchaohaft@hmc.edu}\and
Sam Nelson\footnote{Email: Sam.Nelson@cmc.edu. 
Partially supported by Simons Foundation collaboration grant 702597.}}
\begin{document}
\maketitle

\begin{abstract}
We consider birack and switch colorings of braids. We define a switch structure
on the set of permutation representations of the braid group and
consider when such a representation is a switch automorphism. We define 
quiver-valued invariants of braids using finite switches and biracks
and use these to categorify the birack 2-cocycle invariant for braids. We 
obtain new polynomial invariants of braids via decategorification of
these quivers.
\end{abstract}

\parbox{6in} {\textsc{Keywords:} Braids, braid group representations, 
Burau representation,
biracks, quantum invariants, quiver 
enhancements, categorification, switches, 

\smallskip

\textsc{2020 MSC:} 57K12}

\section{Introduction}

Initially defined in \cite{FRS}, \textit{biracks} and related structures 
known as \textit{switches} have been studied using 
several different notational conventions in works such as \cite{KR,FJK,EN}.
Biracks have the property that colorings of oriented framed knot, link or 
braid diagrams with birack elements satisfying a condition at the crossings
are preserved by Reidemeister II and III moves, making the number of colorings 
by a finite birack an easily-computable integer-valued 
invariant. Switches give colorings to semiarcs in braid diagrams
which are preserved by the braid Reidemeister moves, i.e. the Reidemeister 
III and direct Reidemeister II moves. 
These switch colorings give rise to a large family of permutation 
representations of the braid group, as discussed in \cite{FJK}.
Moreover, the preservation of colorings under moves 
means that any invariant of birack- or switch-colored knots or braids can be 
used to define generally stronger invariants known as \textit{enhancements}.

In this paper we introduce a new family of braid group representations 
derived from switch colorings, called \textit{switch braid representations}. 
These are precisely the permutation representations from \cite{FJK}
which restrict to representations in the category of switches. We show that 
they are equivalently the representations arising from \textit{medial} 
switches, and that they generalize the classical Burau representations.

We also introduce a finite quiver-valued invariant of braids using 
finite biracks and switches. Since finite quivers are small categories, this 
construction categorifies the birack and switch counting invariants for 
braids. The quivers 
we obtain, like those in \cite{CN}, decategorifiy to give us new polynomial 
invariants of braids. Moreover, the set of these invariant quivers has an
algebraic structure of its own which may prove to be of future interest.
Weighting these quivers with 2-cocycles in the birack case yields a 
quiver categorification of the birack 2-cocycle invariant for braids. We are 
then obtain an infinite family of new two-variable polynomial invariants of 
braids by decategorifying these weighted quivers.

The paper is organized as follows. In Section \ref{BnB} we recall the basics of
biracks and birack colorings. In Section \ref{SBR} we
recall the basics of switches and introduce our new 
switch braid representations, characterizing them as the representations 
induced by  
medial switches and showing that they specialize to the Burau representations.
In Section \ref{BB} we define our new switch braid quivers and a new polynomial 
invariant of braids, the switch braid quiver polynomial, associated to each finite 
switch. In Section \ref{BCQ} we enhance the switch braid quiver invariants with 
birack cocycles and obtain two new families of 2-variable polynomial invariants
of braids as decategorifications of the birack braid cocycle quiver. 
We illustrate the new invariants with examples. We conclude in 
Section \ref{Q} with some questions for future research.

\section{Biracks and Braids}\label{BnB}

We begin this section with a definition. See \cite{EN} for more.

\begin{definition}\label{birack}
Let $X$ be a set. A \textit{birack structure} on $X$ is a pair of binary
operations $\utr,\otr:X\times X\to X\times X$ satisfying the following conditions:
\begin{itemize}
\item[(i)] For all $y\in X$ the maps $\alpha_y,\beta_y:X\to X$ 
defined by 
\[
\alpha_y(x)=x\otr y \quad \mathrm{and} \quad \beta_y(x)=x\utr y
\]
are bijective.
\item[(ii)] The map
$S:X\times X\to X\times X$ defined by
\[
S(x,y)=(y\otr x, x\utr y)
\]
is bijective.
\item[(iii)] For all $x,y,z\in X$ the \textit{exchange laws} are satisfied:
\[
\begin{array}{rcl}
(x\utr y)\utr (z\utr y) & = &(x\utr z)\utr (y\otr z) \\
(x\utr y)\otr (z\utr y) & = &(x\otr z)\utr (y\otr z) \\
(x\otr y)\otr (z\otr y) & = &(x\otr z)\otr (y\utr z). 
\end{array}
\]
\end{itemize}
A birack structure additionally satisfying $x\utr x=x\otr x$ for all $x\in X$
is called a \textit{biquandle structure}. We refer to the 
triple $(X,\utr,\otr)$ (or just the set $X$) as a \textit{birack} or \textit{biquandle} if 
the operations $\utr, \otr$ define a birack structure or biquandle structure, 
respectively. 
\end{definition}

\begin{example}
Standard examples of biracks include
\begin{itemize}
\item \textit{Constant action biracks.} For any set $X$ and commuting 
bijections $\sigma,\tau:X\to X$, the operations $x\utr y=\sigma(x)$ and 
$x\otr y=\tau(x)$ define a birack structure on $X$. This is called a
\textit{constant action birack} because, for a given element $x$, 
the actions $y \mapsto x \utr y:X \to X$ and 
$y \mapsto x\otr y:X \to X$ are constant.
If $\sigma=\tau$, then the birack is a biquandle;
if $\sigma=\tau = \mathrm{Id}$ is the identity map, then the birack is called 
\textit{trivial}.
\item \textit{Alexander biquandles.} For any module $X$ over the ring 
$\mathbb{Z}[t^{\pm 1},s^{\pm 1}]$ of two-variable Laurent polynomials, the
operations $x\utr y=tx+(s-t)y$ and $x\otr y=sx$ define a biquandle structure
known as an \textit{Alexander biquandle}. The special case $s=1$ yields the
classical Alexander module.
\item Let $X$ have two (not necessarily abelian) group operations $*$ 
and $\circ$ satisfying the modified distributivity condition
\[x\circ(y*z)=(x\circ y)*x^**(x\circ z)\]
where $x^*$ is the inverse of $x$ with respect to the $*$ operation.
Then $X$ is a \textit{skew brace}, with birack operations given by 
$x\utr y=y^{\circ}\circ(x*y)$ and $x\otr y=y^{\circ}\circ(y*x)$ where
$y^{\circ}$ is the inverse of $y$ with respect to the $\circ$ operation.
\end{itemize}
\end{example}

We will specify finite birack structures by listing their operation
tables. For example, the smallest nontrivial birack has underlying set
$X=\{1,2\}$ and operation table
\[\begin{array}{r|rr} \utr & 1 & 2 \\ \hline 1 & 2 & 2 \\ 2 & 1 & 1 \end{array}
\quad
\begin{array}{r|rr} \otr & 1 & 2 \\ \hline 1 & 2 & 2 \\ 2 & 1 & 1 \end{array}~.
\]

The birack axioms are chosen 
so that the number of \textit{birack colorings}
of an oriented knot, link or braid diagram is invariant under the
Reidemeister II and III moves. 
A \textit{birack coloring} of a diagram $D$ by a birack $X$
(also called an \textit{$X$-coloring} of $D$) 
is an assignment of an element of $X$ to each of the semiarcs of $D$ (i.e., sections
of the diagram between crossing points) such that at every crossing,
we have the following relationship:
\[\includegraphics{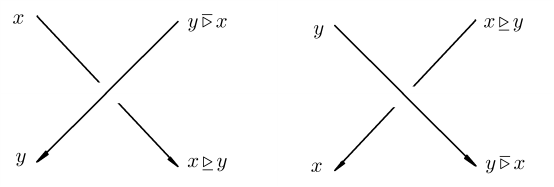}\]

It is then a standard exercise to check that the birack axioms
[(i), (ii), (iii) in Definition \ref{birack}] are precisely the
conditions required by the Reidemeister II and III moves to guarantee that for any 
birack-colored oriented knot (or link or braid) diagram before a move, there is
a unique birack-colored knot (or link or braid) diagram after the move which agrees with the
initial diagram outside the neighborhood of the move. 
In particular, 
\begin{itemize}
\item axiom (i) corresponds to the direct Reidemeister II moves,
\item axiom (ii) corresponds to the reverse Reidemeister II moves,
\item axiom (iii) corresponds to the all-positive Reidemeister III 
move. 
\end{itemize}
Combined with the 
direct Reidemeister II moves, this gives us invariance under all of the Reidemeister III moves.

Let us write $\mathcal{C}(D,X)$ for the set of all $X$-colorings of $D$. 
Biracks have traditionally been applied to 
study knots and links in the following manner: 

\begin{definition}
Given a knot or link diagram $K$ and a finite birack $X$, the 
\textit{birack counting invariant} $\Phi_X^{\mathbb{Z}}(K)$ is defined as the 
cardinality of the set $\mathcal{C}(K,X)$.
\end{definition}

It is a standard exercise to check that if two knot or link
diagrams $K,K'$  are related by Reidemeister II and III moves, then
\[\Phi_X^{\mathbb{Z}}(K) = |\mathcal{C}(K,X)| = |\mathcal{C}(K',X)| = \Phi_X^{\mathbb{Z}}(K').\]

Consequently, we have:

\begin{corollary}\label{birack_counting}
The birack counting invariant is an invariant of knots and links. 
\end{corollary}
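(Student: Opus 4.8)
The plan is to derive the corollary from the single-move invariance recorded in the preceding observation together with Reidemeister's theorem, which asserts that two diagrams represent the same oriented link exactly when they are related by a finite sequence of elementary moves --- here the direct and reverse Reidemeister II moves and the Reidemeister III moves (the latter obtained, as noted above, from the all-positive Reidemeister III move together with the direct Reidemeister II moves). It therefore suffices to prove that $\Phi_X^{\mathbb{Z}}$ is unaffected by a single allowed move and then to propagate the resulting equalities along the sequence, so the bulk of the argument is purely formal once the local statement is in hand.

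First I would promote the ``unique coloring after the move'' statement to an honest bijection. Let a single move carry a diagram $D$ to a diagram $D'$ agreeing with it outside a small disk $B$ containing the move. The discussion above shows that each $X$-coloring of $D$ determines a unique $X$-coloring of $D'$ equal to it outside $B$, giving a map $\mathcal{C}(D,X)\to\mathcal{C}(D',X)$; applying the same reasoning to the inverse move gives a map in the other direction, and since each is determined entirely by the coloring outside $B$ the two are mutually inverse. Hence $|\mathcal{C}(D,X)|=|\mathcal{C}(D',X)|$ for a single move. Inducting on the length of a sequence $D=D_0\to\cdots\to D_n=D'$ realizing the equivalence of two diagrams, the inductive step is precisely this single-move bijection, so by transitivity $\Phi_X^{\mathbb{Z}}(D)=\Phi_X^{\mathbb{Z}}(D')$; combined with Reidemeister's theorem this gives well-definedness of $\Phi_X^{\mathbb{Z}}$ on equivalence classes, which is the asserted invariance.

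The point requiring the most care --- and the genuine obstacle --- is the Reidemeister I move. The birack axioms were arranged to yield invariance under Reidemeister II and III only, which is exactly the move set governing \emph{framed} oriented links; a Reidemeister I move changes the framing and is implemented by the birack's kink map, so for a general birack $\Phi_X^{\mathbb{Z}}$ need not survive it. The cleanest honest version of the statement is thus that $\Phi_X^{\mathbb{Z}}$ is an invariant of framed oriented links, and I would phrase the reduction above in terms of the framed Reidemeister moves. To upgrade to an invariant of unframed knots and links one restricts to biquandles, where the extra axiom $x\utr x=x\otr x$ secures Reidemeister I invariance, or else normalizes the count using the periodic dependence of $\mathcal{C}(D,X)$ on the framing; I would record this framing bookkeeping explicitly so that the precise scope of the invariance claim is unambiguous.
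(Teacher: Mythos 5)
Your argument follows the same route as the paper: the paper's entire justification for this corollary is the one-line observation that diagrams related by Reidemeister II and III moves have coloring sets of equal cardinality, and your single-move bijection (built from the ``unique coloring after the move'' statement applied to the move and to its inverse) followed by induction along a move sequence is exactly the careful version of that ``standard exercise.'' Where you genuinely add something is your final paragraph on Reidemeister I, and you are right to worry. The birack axioms of Definition~\ref{birack} only enforce invariance under the Reidemeister II and III moves, so the argument actually establishes that $\Phi_X^{\mathbb{Z}}$ is an invariant of \emph{framed} oriented knots and links; for a general birack the count does change under a Reidemeister I move (already for a nontrivial constant action birack, the zero-crossing unknot diagram has $|X|$ colorings while a one-kink diagram has only as many colorings as the kink map has fixed points). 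The corollary as printed asserts invariance for ``knots and links'' while its stated justification covers only the framed moves, so your proposal is, if anything, more precise than the paper: the standard repairs you name --- restricting to biquandles, where $x\utr x=x\otr x$ restores Reidemeister I invariance, or normalizing the writhe using the periodic dependence of the coloring count on the framing --- are exactly what is needed if the unframed statement is intended, and the paper itself tacitly concedes the point when it raises the birack-versus-biquandle issue in its closing questions.
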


We could define a similar counting invariant for braids. 
However, taken naively, the number of birack colorings 
of a braid diagram is a trivial invariant.
Indeed, at any crossing, the colors of the 
top two semiarcs determine 
the colors of the bottom two semiarcs
\[
\includegraphics[scale=.5]{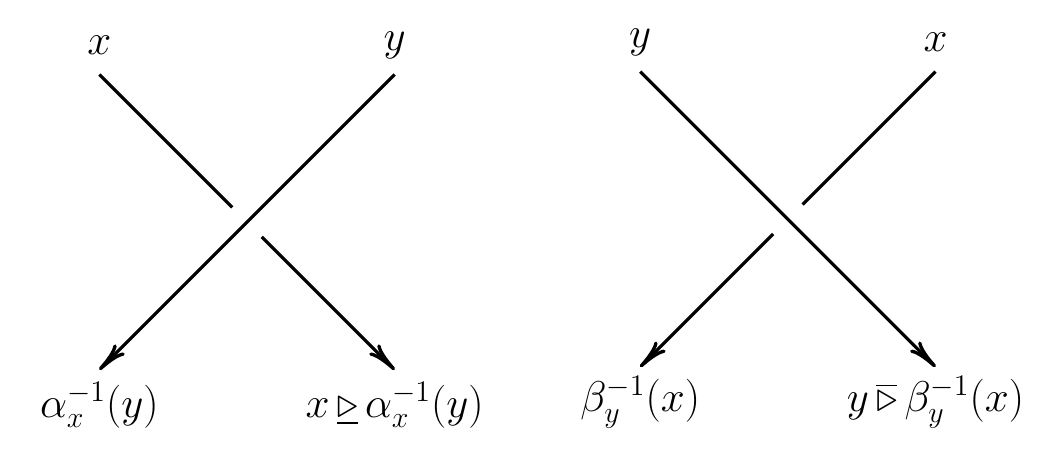}
\]
so it follows that the vector of colors $(x_1, \dots, x_n)$ at the top of the 
braid diagram uniquely determines the rest of the coloring. 
Moreover, any vector of colors $(x_1, \dots, x_n)\in X^n$ 
at the top will yield a 
valid coloring. 
Hence, 
the number colorings of an $n$-strand braid diagram $B$
by a finite birack $X$ is simply $|X|^n$, and 
thus is
completely determined by the size of $X$ and the number of strands of $B$.
Thus we will need a different approach to obtain fruitful braid invariants 
from biracks.

\section{Switch Braid Representations}\label{SBR}

%
%

When dealing with braids, the only relevant Reidemeister moves are 
the Reidemeister III moves and the direct Reidemeister II moves, which we 
will call the \textit{braid Reidemeister moves}. 
The reverse Reidemeister II moves are unnecessary because 
braid strands are always oriented in the same direction. 
Hence, we may allow colorings by a
slightly more general class of structures than biracks, 
which are only required to
respect the Reidemeister III moves and direct Reidemeister II moves. 
Such structures are defined as follows.

\begin{definition}\label{switch}
A \textit{switch structure} on a set $X$ is an invertible map 
$\rho: X \times X \to X \times X$ satisfying 
the \textit{set-theoretic Yang-Baxter equation}:
\[
(\rho \times \mathrm{Id}_X)(\mathrm{Id}_X \times \rho)(\rho \times \mathrm{Id}_X) = 
(\mathrm{Id}_X \times \rho)(\rho \times \mathrm{Id}_X)(\mathrm{Id}_X \times \rho).
\]
We refer to the pair $(X,\rho)$ (or just the set $X$) as a \textit{switch}. 
\end{definition}

\begin{remark}
\label{birack_is_a_switch}
Any birack $(X,\utr, \otr)$ can be realized as a switch $(X,\rho)$ 
by defining $\rho(x,y) = (\alpha_x^{-1}(y), x \utr \alpha_x^{-1}(y))$. 
\end{remark}

A \textit{switch coloring} of a braid diagram $B$ by a switch $(X,\rho)$, 
or an \textit{$X$-coloring} of $B$, is an assignment of an element of $X$ 
to each semiarc of $B$ such that, at every crossing, we 
have the following relationship: 
\[
\includegraphics[scale=.5]{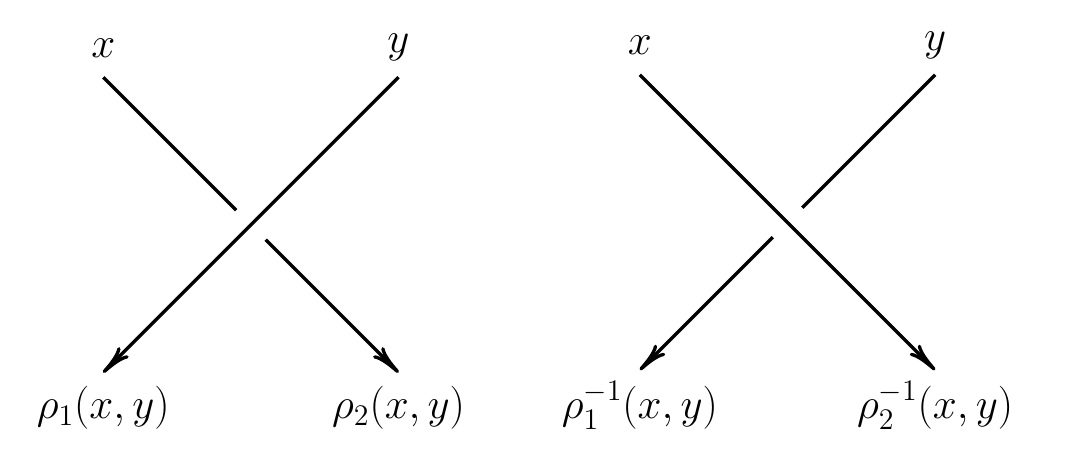}
\]
where $\rho_i^{\pm 1}(x,y)$ is the $i$th coordinate of $\rho^{\pm 1}(x,y)$. 
Note that switch colorings are top-down, as opposed to the left-right birack 
colorings described in Section \ref{BnB}.
Just as with biracks, any vector of colors $(x_1, \dots, x_n)\in X^n$ 
at the top of the braid diagram determines a unique $X$-coloring. 

Given a color vector $\overline{x} = (x_1, \dots, x_n) \in X^n$, let us write 
$\overline{x}\cdot B$ for the color vector $(y_1, \dots, y_n)\in X^n$ induced 
at the bottom of $B$ upon coloring the top of $B$ with $\overline{x}$.
This gives us a map 
\[
(\overline{x},B) \to \overline{x}\cdot B \;: \;X^n\times \{\text{diagrams of $n$-strand braids}\} \to X^n.
\]
It is a standard exercise to check that for any $\overline{x}\in X^n$ and any 
braid diagrams $B,B'$ related by braid 
Reidemeister moves, $\overline{x}\cdot B = \overline{x}\cdot B'$. 
Hence, we have:
\begin{theorem}
The function 
\[
\varphi: \{\text{diagrams of $n$-strand braids}\} \to \left(X^n\right)^{X^n}
\] 
sending $B \mapsto \varphi_B$, in which 
$\varphi_B: X^n \to X^n$ is defined by 
$\varphi_B(\overline{x}) = \overline{x}\cdot B$, is an invariant of 
braids.
\end{theorem}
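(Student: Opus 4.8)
The plan is to unpack what it means for $\varphi$ to be an invariant of braids and then reduce the statement to the pointwise claim recorded immediately before it. By definition, $\varphi$ is an invariant precisely when $\varphi_B = \varphi_{B'}$ whenever the diagrams $B$ and $B'$ represent the same braid. Since two $n$-strand braid diagrams represent the same element of the braid group if and only if they are connected by a finite sequence of braid Reidemeister moves (together with planar isotopies, under which the induced bottom coloring is manifestly unchanged), it suffices to establish $\varphi_B = \varphi_{B'}$ under a single such move; the general case then follows by composing the equalities along the sequence.

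First I would reduce the equality of functions to a pointwise statement. Two elements $\varphi_B, \varphi_{B'} \in \left(X^n\right)^{X^n}$ coincide if and only if $\varphi_B(\overline{x}) = \varphi_{B'}(\overline{x})$ for every color vector $\overline{x}\in X^n$, that is, if and only if $\overline{x}\cdot B = \overline{x}\cdot B'$ for all $\overline{x}$. This is exactly the observation stated just before the theorem, which I am free to assume. Chaining this equality across each move in a sequence connecting $B$ to $B'$ then yields $\varphi_B = \varphi_{B'}$, which completes the proof; equivalently, one may phrase the conclusion as saying that $\varphi$ factors through the set of braids.

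The only genuine content lies in that preceding observation, whose proof is the local, move-by-move verification that the switch axioms were designed to encode. For a direct Reidemeister II move one uses the invertibility of $\rho$: the two crossings apply $\rho$ and then $\rho^{-1}$ (or vice versa) to the pair of colors on the affected strands, returning them unchanged while leaving every other strand color fixed, so the bottom coloring is identical. For a Reidemeister III move one uses the set-theoretic Yang-Baxter equation: the two sides of the move correspond precisely to the composites $(\rho \times \mathrm{Id}_X)(\mathrm{Id}_X \times \rho)(\rho \times \mathrm{Id}_X)$ and $(\mathrm{Id}_X \times \rho)(\rho \times \mathrm{Id}_X)(\mathrm{Id}_X \times \rho)$ acting on the three affected colors, so their equality forces the two bottom colorings to agree.

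The main obstacle is thus not in the theorem itself but in this local check, and within it the Reidemeister III case is the substantive one: one must carefully track the colors through the three crossings on each side of the move and confirm that the Yang-Baxter equation makes the resulting triples agree. Because the paper records this verification as a standard exercise and permits me to assume it, the theorem reduces cleanly to the bookkeeping of function-versus-pointwise equality and the passage from a single move to a finite sequence of moves.
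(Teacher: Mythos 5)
Your proposal is correct and follows essentially the same route as the paper: the paper likewise derives the theorem directly from the pointwise observation that $\overline{x}\cdot B = \overline{x}\cdot B'$ for diagrams related by braid Reidemeister moves, which it records as a standard exercise immediately before the statement. Your additional sketch of the local checks (invertibility of $\rho$ for the direct Reidemeister II move, the Yang--Baxter equation for Reidemeister III) correctly fills in that exercise and is consistent with the paper's intent.
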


\begin{remark}
Note that we can treat $\varphi$ as a function $\B_n \to (X^n)^{X^n}$
(with domain the \textit{$n$-strand braid group}). 
We will do this going forward.
\end{remark}

Furthermore, the map $(\overline{x},B) \to \overline{x}\cdot B: X^n \times \B_n \to X^n$ 
defines a right group action of $\B_n$ on $X^n$. Indeed, the identity braid $1_n \in \B_n$ 
satisfies $\overline{x}\cdot 1_n = \overline{x}$ for every $\overline{x}\in X^n$. And, 
since the group operation in $\B_n$ is vertical stacking of braids, 
we have $\overline{x}\cdot (B_1B_2) = (\overline{x} \cdot B_1) \cdot B_2$ 
for all $\overline{x}\in X^n$ and $B_1,B_2\in \B_n$. Consequently, each 
$\varphi_B$ is in $\Sym(X^n)$, the automorphism group of the set $X^n$, and 
moreover: 

\begin{theorem}
$\varphi: \B_n \to \Sym(X^n)$ is a contravariant functor.
\end{theorem}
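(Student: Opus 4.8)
The plan is to treat both $\B_n$ and $\Sym(X^n)$ as one-object categories, so that a contravariant functor between them is exactly a group anti-homomorphism. Since the preceding discussion already establishes that each $\varphi_B$ lies in $\Sym(X^n)$ and that $(\overline{x},B)\mapsto \overline{x}\cdot B$ is a right action of $\B_n$ on $X^n$, the only things left to verify are that $\varphi$ carries the identity braid to the identity permutation and that it reverses composition.

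First I would check the unit axiom: since $\overline{x}\cdot 1_n = \overline{x}$ for every $\overline{x}\in X^n$, we have $\varphi_{1_n} = \mathrm{Id}_{X^n}$, which is the identity morphism of the single object of $\Sym(X^n)$. Next I would verify the order-reversing composition law. For braids $B_1, B_2 \in \B_n$ and any $\overline{x}\in X^n$, the right-action identity $\overline{x}\cdot(B_1 B_2) = (\overline{x}\cdot B_1)\cdot B_2$ gives
\[
\varphi_{B_1 B_2}(\overline{x}) = \overline{x}\cdot(B_1 B_2) = (\overline{x}\cdot B_1)\cdot B_2 = \varphi_{B_2}\bigl(\varphi_{B_1}(\overline{x})\bigr) = (\varphi_{B_2}\circ \varphi_{B_1})(\overline{x}).
\]
Since $\overline{x}$ was arbitrary, $\varphi_{B_1 B_2} = \varphi_{B_2}\circ \varphi_{B_1}$, which is precisely the contravariant functoriality condition.

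The main obstacle is conceptual rather than computational: one must recognize that the right-action convention forces contravariance rather than covariance. Stacking $B_1$ above $B_2$ applies $\varphi_{B_1}$ to the top colors first and then $\varphi_{B_2}$, so the composite permutation is $\varphi_{B_2}\circ \varphi_{B_1}$, with the factors in the opposite order from the product $B_1 B_2$. The only other point requiring care — that $\varphi$ is well-defined on the group $\B_n$ and not merely on braid diagrams — has already been handled by the invariance of $\overline{x}\cdot B$ under the braid Reidemeister moves established above, so no further argument is needed there.
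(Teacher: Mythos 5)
Your proposal is correct and takes essentially the same route as the paper, which derives the theorem directly from the observation that $(\overline{x},B)\mapsto \overline{x}\cdot B$ is a right action (identity braid acts trivially, and stacking gives $\overline{x}\cdot(B_1B_2)=(\overline{x}\cdot B_1)\cdot B_2$), so that $\varphi_{B_1B_2}=\varphi_{B_2}\circ\varphi_{B_1}$. You simply spell out the unit and composition checks that the paper leaves implicit.
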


The functor $\varphi$ gives us a representation of $\B_n$ 
in the category of bijections (set-automorphisms) of $X^n$. 
This much is known and well-documented in the literature. See, for 
example, \cite{FJK}.

However, $X$ is not merely a set: it is endowed with the additional structure 
of a  switch. It is natural to ask whether the switch structure on $X$ 
induces a switch structure on $X^n$ and, if so, whether $\varphi$ maps into 
the category of switch automorphisms of $X^n$. We will show that the answer 
to the first question is ``yes!'' and 
the answer to the second is ``sometimes.'' 

\subsection{Switch Structure on $X^n$}

To make our lives easier, we will first move 
to a universal-algebra definition of switch. 
Given a switch structure $\rho$ on a set $X$, we can define binary operations 
$\utrd, \otrd, \utrd^{-1}, \otrd^{-1}: X \times X \to X$ by 
\[
x \utrd y = \rho_2(x,y), \quad x \otrd y = \rho_1(y,x), \quad x \utrd^{-1} y = \rho_1^{-1}(y,x), 
\quad x \otrd^{-1} y = \rho_2^{-1}(x,y).
\]
Elsewhere in the literature, $x \utrd y, \,x \otrd y,\, x \utrd^{-1} y,\, x \otrd^{-1} y$ 
are written $x^y, \,x_y,\, x^{\overline{y}}, \,x_{\overline{y}}$, respectively (see \cite{FJK}).

The fact that $\rho$ is a switch structure ensures that
\begin{itemize}

\item[(i)] for all $x,y,z\in X$, 
\begin{align*}
(z \otrd(x \utrd y)) \otrd (y \otrd x) &= (z \otrd y) \otrd x\\
(y \otrd x) \utrd (z \otrd (x \utrd y)) &= (y \utrd z) \otrd (x \utrd (z \otrd y))\\
(x \utrd y) \utrd z &= (x \utrd (z \otrd y)) \utrd (y \utrd z)
\end{align*}

\item[(ii)] for all $x,y\in X$, 
\begin{align*}
(x \utrd y) \utrd^{-1} (y \otrd x) &= x = (x \otrd^{-1}) \otrd (y \utrd^{-1} x)\\
(y \otrd x) \otrd^{-1} (x \utrd y) &= y = (y \utrd^{-1} x) \utrd (x \otrd^{-1} y).
\end{align*}
\end{itemize}

Conversely, given a 4-tuple $(\utrd, \otrd, \utrd^{-1}, \otrd^{-1})$ of binary 
operations on $X$ satisfying (i) and (ii), we can define functions $\rho, \rho': X \times X \to X \times X$ by 
\[
\rho(x,y) = \left(y \otrd x, x \utrd y \right) \quad \text{and} 
\quad \rho'(x,y) = \left(y \utrd^{-1} x, x \otrd^{-1} y \right).
\]
Then (i) ensures that $\rho$ satisfies the Yang-Baxter equation and 
(ii) ensures that $\rho\rho' = \rho'\rho = \mathrm{Id}_{X \times X}$. 
Hence, $\rho$ defines a switch structure on $X$. 

We have now described 
two maps:
\[
\left\{\text{switch structures on $X$}\right\} \rightleftharpoons
\left\{\text{tuples $(\utrd, \otrd, \utrd^{-1}, \otrd^{-1})$ satisfying (i) and (ii)}\right\}.
\]
One can see that these maps are inverses of each other, thus proving the 
following theorem:

\begin{theorem}\label{equivalent}
Let $X$ be a set. Then there is a bijective correspondence 
\[
\{\emph{switch structures on $X$}\} \longleftrightarrow
\left\{\emph{tuples $(\utrd, \otrd, \utrd^{-1}, \otrd^{-1})$ satisfying (i) and (ii)}\right\}.
\]
\end{theorem}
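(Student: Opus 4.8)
The plan is to establish the bijection by showing that the two maps already constructed in the text—call them $F$ from switch structures to $4$-tuples and $G$ from $4$-tuples back to switch structures—are mutually inverse. The text has already done the substantive work: it verified that $F$ lands in the set of tuples satisfying (i) and (ii), and that $G$ produces a genuine switch structure (the Yang--Baxter equation follows from (i), and invertibility with inverse $\rho'$ follows from (ii)). So all that remains for the theorem is the purely formal computation $G\circ F=\mathrm{Id}$ and $F\circ G=\mathrm{Id}$, after which bijectivity is immediate.

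First I would show $G\circ F=\mathrm{Id}$ on switch structures. Starting from a switch $\rho$, the map $F$ produces the operations $x\utrd y=\rho_2(x,y)$, $x\otrd y=\rho_1(y,x)$, $x\utrd^{-1}y=\rho_1^{-1}(y,x)$, $x\otrd^{-1}y=\rho_2^{-1}(x,y)$. Applying $G$ to this tuple yields a map whose value at $(x,y)$ is $(y\otrd x,\,x\utrd y)$; unwinding the definitions, $y\otrd x=\rho_1(x,y)$ and $x\utrd y=\rho_2(x,y)$, so the composite returns $(\rho_1(x,y),\rho_2(x,y))=\rho(x,y)$. Hence $G(F(\rho))=\rho$. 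The only care needed here is bookkeeping of the argument order, since $\otrd$ and $\utrd^{-1}$ are defined with their inputs transposed relative to $\rho$; I would track each coordinate explicitly to avoid a silent swap.

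Next I would show $F\circ G=\mathrm{Id}$ on tuples. Given a tuple $(\utrd,\otrd,\utrd^{-1},\otrd^{-1})$ satisfying (i) and (ii), the map $G$ produces $\rho(x,y)=(y\otrd x,\,x\utrd y)$ with inverse $\rho'(x,y)=(y\utrd^{-1}x,\,x\otrd^{-1}y)$. Applying $F$ recovers $x\utrd y=\rho_2(x,y)=x\utrd y$ and $x\otrd y=\rho_1(y,x)=x\otrd y$ directly. For the inverse operations I would use that $\rho^{-1}=\rho'$, which is exactly what (ii) guarantees: then $x\utrd^{-1}y=\rho_1^{-1}(y,x)=\rho'_1(y,x)=x\utrd^{-1}y$ and similarly $x\otrd^{-1}y=\rho_2^{-1}(x,y)=\rho'_2(x,y)=x\otrd^{-1}y$, so all four operations are recovered and $F(G(\text{tuple}))$ is the original tuple.

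Since $F$ and $G$ are two-sided inverses of each other, each is a bijection, which is precisely the asserted correspondence. The only genuine subtlety—and the step I expect to be the main obstacle—is the dependence of $F$ on the \emph{inverse} of $\rho$ rather than on $\rho$ alone: verifying $F\circ G=\mathrm{Id}$ on the two inverse operations requires knowing that the $\rho'$ supplied by $G$ is in fact the two-sided inverse of $\rho$, so I would invoke the already-established consequence of (ii) that $\rho\rho'=\rho'\rho=\mathrm{Id}_{X\times X}$ before identifying $\rho^{-1}$ with $\rho'$. With that identification in hand, the remaining equalities are immediate substitutions, and no further appeal to axioms (i) or the Yang--Baxter equation is needed for the inverse-verification itself.
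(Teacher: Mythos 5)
Your proposal is correct and follows the same route as the paper, which constructs the two maps and simply asserts ``one can see that these maps are inverses of each other''; you have merely filled in that routine verification, with the coordinate bookkeeping and the identification $\rho^{-1}=\rho'$ handled correctly.
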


Moreover, switch colorings are the same as colorings in which the following relationships 
hold at crossings:
\[
\includegraphics[scale=.5]{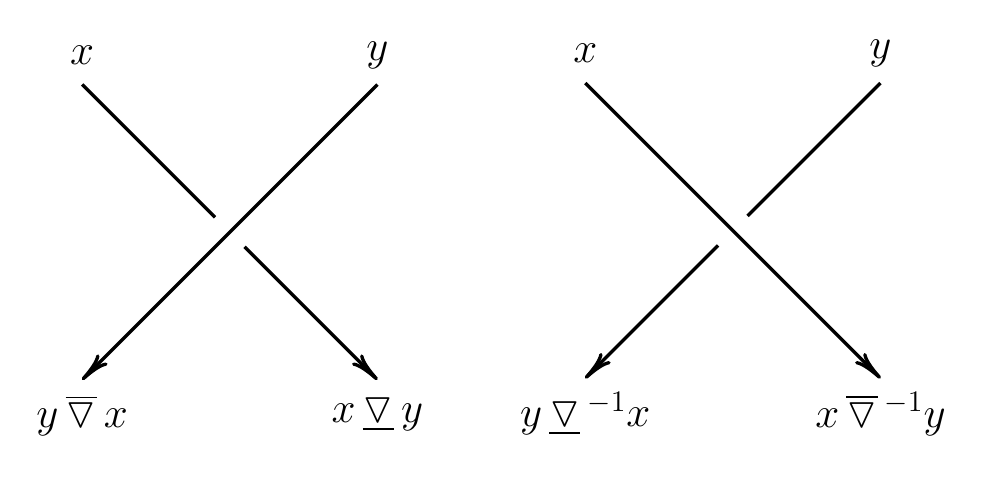}
\]

Henceforth, we will use the term \textit{switch} to refer to 
either of the two equivalent structures discussed in Theorem \ref{equivalent}. 
Now we are ready to define switch products.
Given a family of switches $\left\{\left(X_i, \utrd_i, \otrd_i, \utrd^{-1}_i, \otrd^{-1}_i\right)\right\}_{i \in I}$, 
we define binary operations $\utrd, \otrd, \utrd^{-1}, \otrd^{-1}$ on $\prod_{i\in I} X_i$ as follows:
\begin{align*}
\overline{x} \utrd \overline{y} &= \left( x_i \,\utrd_i \,y_i \right)_{i \in I}\\
\overline{x} \otrd \overline{y} &= \left( x_i \,\otrd_i \,y_i \right)_{i \in I}\\
\overline{x} \utrd^{-1} \overline{y} &= \left( x_i \,\utrd_i^{-1} \,y_i \right)_{i \in I}\\
\overline{x} \otrd^{-1} \overline{y} &= \left( x_i \,\otrd_i^{-1} \,y_i \right)_{i \in I}.
\end{align*}
Since the operations are defined pointwise, it follows immediately that:

\begin{theorem}
$\left(\prod_{i\in I} X_i, \utrd, \otrd, \utrd^{-1}, \otrd^{-1} \right)$ is a switch.
\end{theorem}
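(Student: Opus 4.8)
The plan is to invoke the universal-algebra reformulation provided by Theorem \ref{equivalent}. Rather than verifying the set-theoretic Yang--Baxter equation for the induced map $\rho$ on $\prod_{i \in I} X_i$ directly, I would check that the pointwise operations $\utrd, \otrd, \utrd^{-1}, \otrd^{-1}$ satisfy conditions (i) and (ii); by Theorem \ref{equivalent} this is equivalent to the product carrying a switch structure, which is exactly what we want.

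The key observation is that every condition appearing in (i) and (ii) is a universally quantified equation between two terms built from the four binary operations and the variables $x, y, z$. I would first record the elementary fact that these operations evaluate coordinatewise: for any term $t(x_1, \dots, x_k)$ in the language $\{\utrd, \otrd, \utrd^{-1}, \otrd^{-1}\}$ and any tuples $\overline{x}^{(1)}, \dots, \overline{x}^{(k)} \in \prod_{i \in I} X_i$, one has
\[
t\bigl(\overline{x}^{(1)}, \dots, \overline{x}^{(k)}\bigr)_i = t\bigl(x^{(1)}_i, \dots, x^{(k)}_i\bigr)
\]
for every $i \in I$, where the left-hand side is computed using the product operations and the right-hand side using the operations of $X_i$. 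This is proved by a short induction on the structure of $t$: it holds for variables by definition, and each operation was defined so as to act in each coordinate separately, so the identity is preserved under forming $\utrd, \otrd, \utrd^{-1}, \otrd^{-1}$ of subterms.

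Given this, an equation $s = t$ holds for all inputs in $\prod_{i \in I} X_i$ if and only if it holds for all inputs in each factor $X_i$. Since every $X_i$ is a switch, each $X_i$ satisfies all of the identities in (i) and (ii); applying the coordinatewise evaluation to both sides of each identity shows that the product satisfies them as well. Theorem \ref{equivalent} then certifies that $\left(\prod_{i \in I} X_i, \utrd, \otrd, \utrd^{-1}, \otrd^{-1}\right)$ is a switch.

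There is no substantial obstacle here: this is the standard fact that an equationally defined class of algebras is closed under arbitrary products, and the authors' own remark that the operations ``are defined pointwise'' is precisely the content of the evaluation lemma. The only point requiring a modicum of care is the bookkeeping of the inverse operations $\utrd^{-1}, \otrd^{-1}$ alongside $\utrd, \otrd$, ensuring that condition (ii)---which encodes the invertibility $\rho\rho' = \rho'\rho = \mathrm{Id}$---is handled on equal footing with the Yang--Baxter identities in (i). Once all four operations are treated uniformly in the induction, the result follows.
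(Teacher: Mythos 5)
Your proposal is correct and follows essentially the same route as the paper, which simply observes that the identities (i) and (ii) hold in the product because the operations are defined pointwise; you have merely spelled out the standard ``equational classes are closed under products'' argument that the authors leave implicit. No gap here.
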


In particular, any switch structure on $X$ induces a natural 
switch structure on $X^n$. Hence, for every $B\in \B_n$, the map $\varphi_B$ is a bijection of 
a switch. But when is $\varphi_B$ a \textit{switch automorphism}?

\subsection{When $\varphi_B$ is a Switch Automorphism}

We begin with a definition.

\begin{definition}
A \textit{switch homomorphism} from $(X,\rho)$ to $(Y,\tau)$ is a 
map $f:X \to Y$ satisfying 
\[
\tau(f(x),f(y)) = \left(f(\rho_1(x,y)), f(\rho_2(x,y))\right)
\]
for all $x,y\in X$.
\end{definition}

Equivalently, a switch homomorphism from $\left(X, \utrd_1, \otrd_1, \utrd^{-1}_1, \otrd^{-1}_1\right)$
to $\left(X, \utrd_2, \otrd_2, \utrd^{-1}_2, \otrd^{-1}_2\right)$ is a map 
$f:X \to Y$ satisfying 
\[
f(x \,\utrd_1 \,y ) = f(x) \,\utrd_2 \,f(y) \quad \text{and} \quad 
f(x \, \otrd_1 \,y) = f(x) \,\otrd_2 \,f(y)
\]
for all $x,y\in X$. One can readily check that this definition yields a category 
whose objects are switches and whose morphisms are switch homomorphisms. 
Furthermore, one can check that the isomorphisms in this category are precisely the 
morphisms whose underlying maps are bijective (in other words, the inverse of a 
bijective switch homomorphism is also a switch homomorphism). 
Thus, $\varphi_B : X^n \to X^n$ is a switch automorphism 
if and only if it is a switch homomorphism. 

So it suffices to characterize when $\varphi_B$ is a switch homomorphism. 
This is the main result of the section:

\begin{theorem}\label{homomorphism_iff_abelian}
$\varphi_B:X^n \to X^n$ is a switch homomorphism for every 
$B\in \B_n$ if and only if $X$ is medial. 
\end{theorem}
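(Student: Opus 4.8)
The plan is to reduce the statement to a single condition on the switch map $\rho$ and then recognize that condition as mediality. First I would observe that the set $H = \{B \in \B_n : \varphi_B \text{ is a switch homomorphism}\}$ is a subgroup of $\B_n$. Indeed, since $\varphi$ is a contravariant functor we have $\varphi_{B_1 B_2} = \varphi_{B_2}\circ \varphi_{B_1}$, and switch homomorphisms are closed under composition, contain the identity map, and—being bijective—have switch-homomorphism inverses (as noted just before the theorem). Hence $H$ is closed under the group operations, so $\varphi_B$ is a switch homomorphism for every $B\in\B_n$ if and only if each standard generator $\sigma_i$ lies in $H$.

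Next I would unpack $\varphi_{\sigma_i}$. The generator $\sigma_i$ alters only the colors in positions $i$ and $i+1$, replacing $(z_i,z_{i+1})$ by $\rho(z_i,z_{i+1}) = (z_{i+1}\otrd z_i,\, z_i\utrd z_{i+1})$ and fixing every other coordinate. Because the induced switch operations on $X^n$ are defined coordinatewise, the homomorphism equations hold trivially in every coordinate other than $i,i+1$, as the identity is a switch homomorphism. Thus $\varphi_{\sigma_i}$ is a switch homomorphism exactly when $\rho$, regarded as a self-map of the product switch $X^2$, is a switch homomorphism; and since this condition is identical for each $i$, it suffices to treat a single generator, say $\sigma_1\in\B_2$.

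It then remains to expand the two homomorphism equations $\rho(\overline{x}\utrd\overline{y}) = \rho(\overline{x})\utrd\rho(\overline{y})$ and $\rho(\overline{x}\otrd\overline{y}) = \rho(\overline{x})\otrd\rho(\overline{y})$ for $\overline{x}=(x_1,x_2)$ and $\overline{y}=(y_1,y_2)$, using $\rho(x,y) = (y\otrd x, x\utrd y)$ together with the coordinatewise operations. Reading off each of the two coordinates on both sides turns these equations into four entropic identities of the form $(a\star b)\diamond(c\star d) = (a\diamond c)\star(b\diamond d)$, ranging over the choices $\star,\diamond\in\{\utrd,\otrd\}$—precisely the medial conditions on $X$. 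This settles both directions simultaneously: mediality makes $\rho$, hence every $\varphi_{\sigma_i}$, hence every $\varphi_B$, a switch homomorphism, and conversely the homomorphism property of the single map $\varphi_{\sigma_1}$ already forces all four medial laws.

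The routine parts are the subgroup reduction and the coordinatewise triviality; the step demanding the most care is the final expansion, where one must track the arguments of $\utrd$ and $\otrd$ through $\rho$ correctly and verify that the four identities produced are exactly the defining medial laws—neither weaker nor stronger—so that the equivalence is tight. A secondary point to confirm is that no extra constraints arise from the inverse operations $\utrd^{-1},\otrd^{-1}$; this is automatic, since the switch-homomorphism definition involves only $\utrd$ and $\otrd$, and the inverse operations are then governed by the general fact that inverses of bijective switch homomorphisms are again switch homomorphisms.
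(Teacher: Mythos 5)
Your proposal is correct and follows essentially the same route as the paper: reduce to the Artin generators via contravariant functoriality together with closure of switch homomorphisms under composition and inversion, then expand the homomorphism equations in coordinates $i$ and $i+1$ to read off the entropic identities. The only cosmetic difference is that your expansion produces four identities while Definition \ref{medial} lists three; this is harmless, since the two mixed identities $(a\utrd b)\otrd(c\utrd d)=(a\otrd c)\utrd(b\otrd d)$ and $(a\otrd b)\utrd(c\otrd d)=(a\utrd c)\otrd(b\utrd d)$ coincide after renaming variables and swapping sides.
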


The definition of \textit{medial switch} is a special case of 
the more general concept of \textit{entropic variety} 
from universal algebra (see \cite{DD} for more).
It generalizes the concepts of \textit{medial quandle} and 
\textit{medial biquandle} 
studied in works such as \cite{JPSZ,SCN,SCH}.

\begin{definition}\label{medial}
A \textit{medial switch} (also called an \textit{entropic switch} or 
\textit{abelian switch}) is a switch $X$ such that 
\begin{align*}
(x \utrd y) \utrd (w \utrd z) &= (x \utrd w) \utrd (y \utrd z)\\
(x \utrd y) \otrd (w \utrd z) &= (x \otrd w) \utrd (y \otrd z)\\
(x \otrd y) \otrd (w \otrd z) &= (x \otrd w) \otrd (y \otrd z)
\end{align*}
for all $x,y,w,z\in X$.
\end{definition}

\begin{lemma}\label{medial_lemma}
A switch $X$ is medial if and only if $\varphi_{\sigma_i}:X^n \to X^n$
is a switch homomorphism for every generator $\sigma_i$ of $\B_n$.
\end{lemma}

As usual, $\sigma_1, \dots, \sigma_{n-1}$ 
denote the generators in the Artin presentation of $\B_n$.

\begin{proof}
The lemma is most easily understood by studying the following diagrams:
\[\includegraphics[scale=.375]{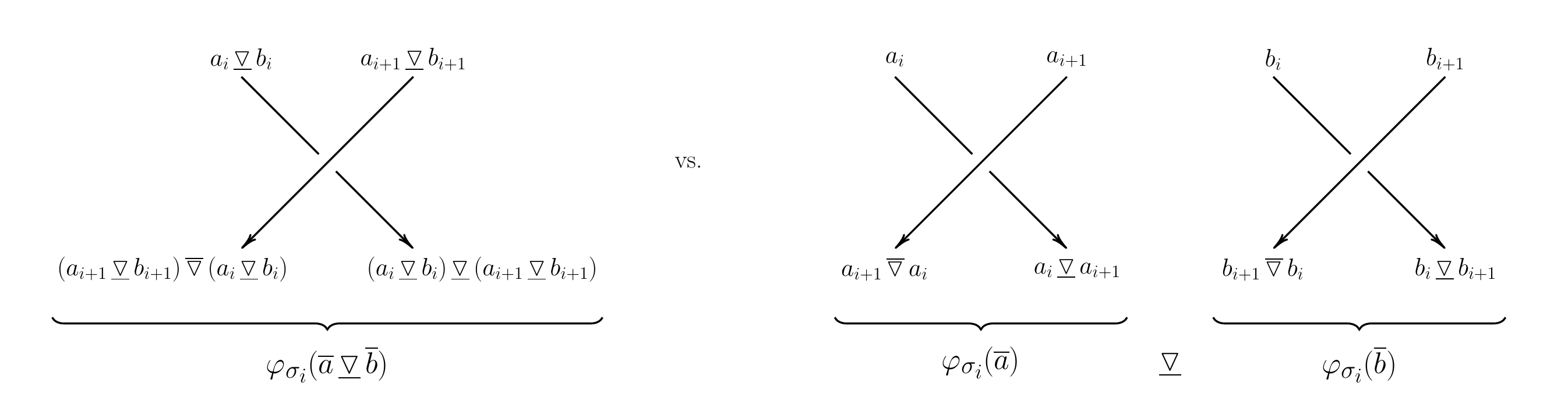}\]
\[\includegraphics[scale=.375]{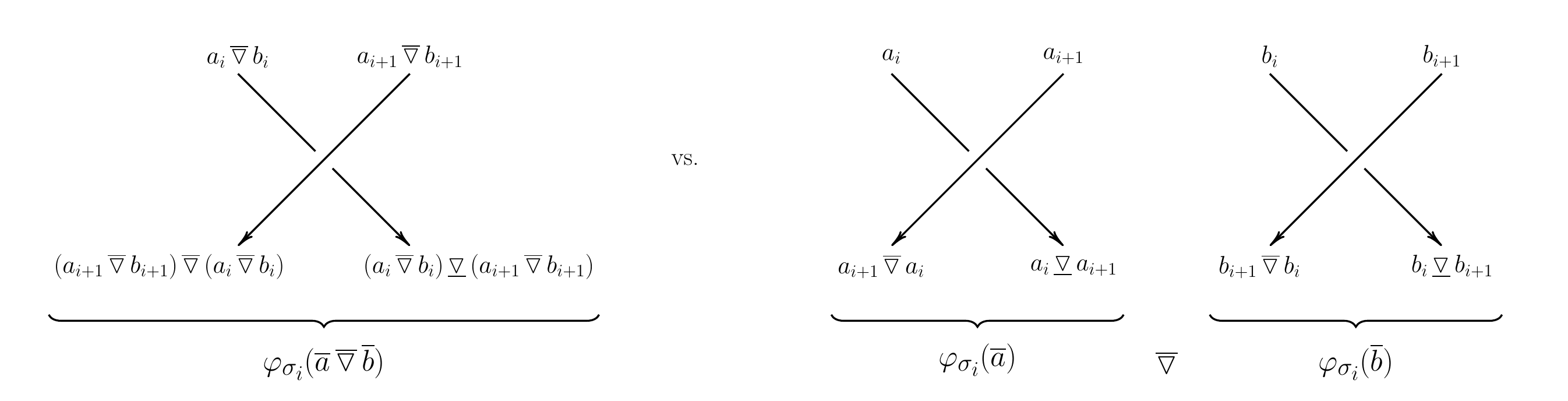}\]
In more detail, we argue as follows:
\begin{itemize}

\item[($\Rightarrow$):] Suppose $X$ is medial. Let $i\in \{1, \dots, n-1\}$
and  
$\overline{a}, \overline{b} \in X^n$. 
Then for all $k\in \{1, \dots, n\}$, we have
\begin{align*}
\left[\varphi_{\sigma_i}(\overline{a} \utrd \overline{b})\right]_k 
&= 
\begin{cases}
(a_{i+1} \utrd b_{i+1}) \otrd (a_i \utrd b_i) &\text{if $k=i$}\\
(a_{i} \utrd b_{i}) \utrd (a_{i+1} \utrd b_{i+1}) &\text{if $k=i+1$}\\
a_k \utrd b_k &\text{otherwise}
\end{cases}\\
&=
\begin{cases}
(a_{i+1} \otrd a_{i}) \utrd (b_{i+1} \otrd b_i) &\text{if $k=i$}\\
(a_{i} \utrd a_{i+1}) \utrd (b_{i} \utrd b_{i+1}) &\text{if $k=i+1$}\\
a_k \utrd b_k &\text{otherwise}
\end{cases}\\
&=
\left[\varphi_{\sigma_i}(\overline{a}) \utrd \varphi_{\sigma_i}(\overline{b}) \right]_k
\end{align*}
and 
\begin{align*}
\left[\varphi_{\sigma_i}(\overline{a} \otrd \overline{b})\right]_k 
&= 
\begin{cases}
(a_{i+1} \otrd b_{i+1}) \otrd (a_i \otrd b_i) &\text{if $k=i$}\\
(a_{i} \otrd b_{i}) \utrd (a_{i+1} \otrd b_{i+1}) &\text{if $k=i+1$}\\
a_k \utrd b_k &\text{otherwise}
\end{cases}\\
&=
\begin{cases}
(a_{i+1} \otrd a_{i}) \otrd (b_{i+1} \otrd b_i) &\text{if $k=i$}\\
(a_{i} \utrd a_{i+1}) \otrd (b_{i} \utrd b_{i+1}) &\text{if $k=i+1$}\\
a_k \utrd b_k &\text{otherwise}
\end{cases}\\
&=
\left[\varphi_{\sigma_i}(\overline{a}) \otrd \varphi_{\sigma_i}(\overline{b}) \right]_k.
\end{align*}
Hence, 
\[
\varphi_{\sigma_i}(\overline{a} \,\utrd \,\overline{b} ) = 
\varphi_{\sigma_i}(\overline{a}) \,\utrd \,\varphi_{\sigma_i}(\overline{b}) 
\quad\text{and}\quad
\varphi_{\sigma_i}(\overline{a} \, \otrd \,\overline{b}) = 
\varphi_{\sigma_i}(\overline{a}) \,\otrd \,\varphi_{\sigma_i}(\overline{b})
\]
Therefore, $\varphi_{\sigma_i}$ is a switch homomorphism
for every braid group generator $\sigma_i$.

%

\item[($\Leftarrow$):] Suppose $\varphi_{\sigma_i}:X^n \to X^n$
is a switch homomorphism for every 
generator $\sigma_i$ of $\B_n$. 
Let $x,y,w,z\in X$. Choose $i \in \{1, \dots, n-1\}$ and $\overline{a}, \overline{b}\in X^n$ so that 
\[
a_i = x, \quad a_{i+1} = w, \quad b_i = y, \quad b_{i+1} = z.
\]
Then, since $\varphi_{\sigma_i}$ is a switch homomorphism, we have 
\begin{equation}\label{eq:utrd}
\varphi_{\sigma_i}(\overline{a} \,\utrd \,\overline{b} ) = 
\varphi_{\sigma_i}(\overline{a}) \,\utrd \,\varphi_{\sigma_i}(\overline{b}) 
\end{equation}
and
\begin{equation}\label{eq:otrd}
\varphi_{\sigma_i}(\overline{a} \, \otrd \,\overline{b}) = 
\varphi_{\sigma_i}(\overline{a}) \,\otrd \,\varphi_{\sigma_i}(\overline{b}).
\end{equation}
Taking the ($i+1$)th components of Equation \ref{eq:utrd}, we obtain
\begin{align*}
(x \utrd y) \utrd (w \utrd z) &= (a_i \utrd b_i) \utrd (a_{i+1} \utrd b_{i+1})\\
&= \left[\varphi_{\sigma_i}(\overline{a} \utrd \overline{b})\right]_{i+1}\\
&= \left[\varphi_{\sigma_i}(\overline{a}) \utrd \varphi_{\sigma_i}(\overline{b})\right]_{i+1}\\
&= (a_i \utrd a_{i+1}) \utrd (b_i \utrd b_{i+1})\\
&= (x \utrd w) \utrd (y \utrd z).
\end{align*}
Taking the $i$th components of Equation \ref{eq:utrd}, we obtain 
\begin{align*}
(w \utrd z) \otrd (x \utrd y) &= (a_{i+1} \utrd b_{i+1}) \otrd (a_{i} \utrd b_{i})\\
&= \left[\varphi_{\sigma_i}(\overline{a} \utrd \overline{b})\right]_{i}\\
&= \left[\varphi_{\sigma_i}(\overline{a}) \utrd \varphi_{\sigma_i}(\overline{b})\right]_{i}\\
&= (a_{i+1} \otrd a_{i}) \utrd (b_{i+1} \otrd b_{i})\\
&= (w \otrd x) \utrd (z \otrd y).
\end{align*}
Taking the $i$th components of Equation \ref{eq:otrd}, we obtain
\begin{align*}
(w \otrd z) \otrd (x \otrd y) &= (a_{i+1} \otrd b_{i+1}) \otrd (a_{i} \otrd b_{i})\\
&= \left[\varphi_{\sigma_i}(\overline{a} \otrd \overline{b})\right]_{i}\\
&= \left[\varphi_{\sigma_i}(\overline{a}) \otrd \varphi_{\sigma_i}(\overline{b})\right]_{i}\\
&= (a_{i+1} \otrd a_{i}) \otrd (b_{i+1} \otrd b_{i})\\
&= (w \otrd x) \otrd (z \otrd y).
\end{align*}
Therefore, $X$ is medial.

%

\end{itemize}
This proves the lemma.
\end{proof}

\begin{proof}[Proof of Theorem \ref{homomorphism_iff_abelian}]
By Lemma \ref{medial_lemma}, it suffices to show that $\varphi_B:X^n \to X^n$ is a switch homomorphism for every $B\in \B_n$
if and only if $\varphi_{\sigma_i}:X^n \to X^n$
is a switch homomorphism for every 
generator $\sigma_i$ of $\B_n$. 
\begin{itemize}

\item[($\Rightarrow$):] This direction is trivial.

\item[($\Leftarrow$):] Suppose $\varphi_{\sigma_i}:X^n \to X^n$
is a switch homomorphism for every 
generator $\sigma_i$ of $\B_n$.
Let $B\in \B_n$. Then we can write 
$B = \sigma_{\iota(1)}^{\epsilon(1)}\dots \sigma_{\iota(k)}^{\epsilon(k)}$ 
for some $k$ and functions $\iota: \{1, \dots, k\} \to \{1, \dots, n-1\}$ and
$\epsilon: \{1, \dots, k\} \to \{+,-\}$. Then, using the fact that $\varphi$ is a
contravariant functor, we obtain
\begin{align*}
\varphi_B &= \varphi(B)\\ &=
\varphi\left(\sigma_{\iota(1)}^{\epsilon(1)}\dots \sigma_{\iota(k)}^{\epsilon(k)}\right)\\
&= \varphi\left(\sigma_{\iota(k)}^{\epsilon(k)}\right)\dots\varphi \left(\sigma_{\iota(1)}^{\epsilon(1)}\right)\\
&= \varphi\left(\sigma_{\iota(k)}\right)^{\epsilon(k)}\dots\varphi \left(\sigma_{\iota(1)}\right)^{\epsilon(1)}\\
&=  \varphi_{\sigma_{\iota(k)}}^{\epsilon(k)}\dots\varphi_{\sigma_{\iota(1)}}^{\epsilon(1)}.
\end{align*}
Since each $\varphi_{\sigma_{\iota(i)}}$ is a switch homomorphism, it follows that $\varphi_B$ is a 
switch homomorphism. 
\end{itemize}
This completes the proof.
\end{proof}

For a switch $X$, let $\Aut(X^n)$ denote 
the automorphism group of the switch $X^n$.
We then have the following:

\begin{corollary}
Let $X$ be a switch. Then 
\[
 \text{$\varphi$ defines a functor 
$\B_n \to \Aut(X^n)$} \iff \text{$X$ is medial}.
\]
\end{corollary}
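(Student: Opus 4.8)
The plan is to obtain the corollary directly from Theorem \ref{homomorphism_iff_abelian}, combined with the two structural facts already in hand: that $\varphi\colon \B_n \to \Sym(X^n)$ is a contravariant functor, and that a bijective switch homomorphism $X^n \to X^n$ is automatically a switch automorphism. The crucial reinterpretation is that ``$\varphi$ defines a functor $\B_n \to \Aut(X^n)$'' means precisely that $\varphi$ factors through the subgroup inclusion $\Aut(X^n) \hookrightarrow \Sym(X^n)$, i.e. that $\varphi_B \in \Aut(X^n)$ for every $B \in \B_n$. Since $\Aut(X^n)$ is a subgroup of $\Sym(X^n)$, the (contravariant) functoriality is inherited for free once every $\varphi_B$ lands in $\Aut(X^n)$; thus the entire content of the corollary reduces to the equivalence ``every $\varphi_B$ is a switch automorphism $\iff$ $X$ is medial.''

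For the forward direction, I would assume that $\varphi$ defines a functor into $\Aut(X^n)$. Then every $\varphi_B$ is by definition a switch automorphism of $X^n$, hence in particular a switch homomorphism. Applying the forward implication of Theorem \ref{homomorphism_iff_abelian} immediately yields that $X$ is medial.

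For the reverse direction, I would assume $X$ is medial. Theorem \ref{homomorphism_iff_abelian} then guarantees that each $\varphi_B$ is a switch homomorphism. Since we have already observed that $\varphi_B \in \Sym(X^n)$, each $\varphi_B$ is also a bijection; invoking the fact (recorded just before the theorem) that a bijective switch homomorphism is a switch automorphism, I conclude $\varphi_B \in \Aut(X^n)$ for all $B$. Hence the image of the contravariant functor $\varphi\colon \B_n \to \Sym(X^n)$ lies in $\Aut(X^n)$, and restricting the codomain delivers the desired functor $\B_n \to \Aut(X^n)$.

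I do not anticipate any genuine obstacle, since this is essentially a repackaging of results already established. The only point that needs care is invoking the equivalence ``switch automorphism $\iff$ bijective switch homomorphism'' correctly, so that the condition ``$\varphi_B \in \Aut(X^n)$'' is literally identified with the switch-homomorphism hypothesis of Theorem \ref{homomorphism_iff_abelian}; and noting explicitly that functoriality transfers automatically because $\Aut(X^n)$ is closed under composition and inversion inside $\Sym(X^n)$.
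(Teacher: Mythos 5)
Your argument is correct and matches the paper's intent exactly: the paper states this corollary without proof as an immediate consequence of Theorem \ref{homomorphism_iff_abelian}, together with the earlier observations that $\varphi_B$ is always a bijection and that a bijective switch homomorphism is a switch automorphism. Your write-up simply makes explicit the routine details the paper leaves implicit.
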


\begin{definition}
Let $X$ be a medial switch. Then the contravariant functor $\varphi: \B_n \to \Aut(X^n)$
is called a \textit{switch braid representation}.
\end{definition}

This defines an infinite family of braid group representations.

%

\subsection{Examples}

\begin{example}\label{alexander_switch}
Let $X$ be a module over a commutative ring $R$ and 
let $\lambda, \mu$ be invertible elements of $R$. The map 
$\rho: X \times X \to X\times X$ given by 
\[
\rho(x,y) = \big(\lambda y + (1-\mu\lambda)x,\; \mu x\big)
\]
defines a switch structure on $X$ called the \textit{Alexander switch}.
The $\utrd, \otrd$ operations are given by
\[
x \utrd y = \mu x \quad \text{and} \quad x \otrd y = \lambda x + (1-\mu\lambda)y.
\]
Observe that for all
$x,y,w,z\in X$, we have 
\begin{align*}
(x \utrd y) \utrd (w \utrd z) 
&= (\mu x ) \utrd (w \utrd z)\\
&=  \mu^2 x\\
&= (\mu x) \utrd (y \utrd z)\\
&= (x \utrd w) \utrd (y \utrd z)
\end{align*}
and 
\begin{align*}
(w \utrd z) \otrd (x \utrd y) 
&= (\mu w) \otrd (\mu  x)\\
&= \lambda \mu w + (1-\mu \lambda) \mu x\\
&= \mu (\lambda w + (1-\mu\lambda)x)\\
&= (\lambda w + (1-\mu\lambda)x) \utrd (z \otrd y)\\
&= (w \otrd x) \utrd (z \otrd y)
\end{align*}
and
\begin{align*}
(w \otrd z) \otrd (x \otrd y) 
&= (\lambda w + (1-\mu\lambda) z) \otrd (\lambda x +(1-\mu\lambda) y)\\
&= \lambda (\lambda w + (1-\mu\lambda) z) + (1-\mu\lambda) (\lambda x +(1-\mu\lambda) y)\\
&= \lambda^2 w + \lambda(1-\mu\lambda) z + \lambda(1-\mu\lambda) x + (1-\mu\lambda)^2 y\\
&= \lambda (\lambda w + (1-\mu\lambda) x) + (1-\mu\lambda) (\lambda z +(1-\mu\lambda) y)\\
&= (\lambda w + (1-\mu\lambda) x) \otrd (\lambda z +(1-\mu\lambda) y)\\
&= (w \otrd x) \otrd (z \otrd y).
\end{align*}
Therefore, the Alexander switch is medial. Consequently, 
each Alexander switch gives 
rise to a switch braid representation of the braid group. 
\end{example}

\begin{example}
In this example, we show that the classical 
Burau representation of the braid group is a special 
case of the switch braid representation. 
Let $\Lambda = \mathbb{Z}[t,t^{-1}]$ be the ring 
of Laurent polynomials. 
The \textit{contravariant Burau representation} of $\B_n$ is the unique group 
homomorphism 
$\psi_n : (\B_n)^{\mathsf{op}} \to \mathrm{GL}_n(\Lambda)$ sending
\[
\sigma_i \mapsto U_i=
\begin{pmatrix}
I_{i-1} & 0 & 0 & 0\\
0 & 1-t & t & 0\\
0 & 1 & 0 & 0\\
0 & 0 & 0 & I_{n-i-1}
\end{pmatrix}.
\]
First observe that $\Lambda$ can be viewed as a 
module over itself. 
We can then
define an
Alexander switch structure on $\Lambda$ by taking
$\rho: \Lambda \times \Lambda \to \Lambda \times \Lambda$ to be
\[
\rho(x,y) = \big( ty + (1-t)x, \; x \big).
\]
Here we have chosen our invertible elements to be 
$\lambda = t$ and $\mu = 1$. 
Then by Example
\ref{alexander_switch}, $\Lambda$ defines a medial switch. 
Moreover, the switch braid representation 
$\varphi : \B_n \to \Aut(\Lambda^n)$ arising from 
$\Lambda$ satisfies the following: for all generators 
$\sigma_i$ of $\B_n$ and all $\overline{a} \in \Lambda^n$, 
\begin{align*}
\varphi_{\sigma_i}(\overline{a}) 
&= 
\begin{pmatrix}
a_1&
\dots&
a_{i-1}&
a_{i+1} \otrd a_i&
a_i \utrd a_{i+1}&
a_{i+2}&
\dots&
a_n
\end{pmatrix}^\intercal\\
&= 
\begin{pmatrix}
a_1&
\dots&
a_{i-1}&
ta_{i+1} + (1-t)a_i&
a_i&
a_{i+2}&
\dots&
a_n
\end{pmatrix}^\intercal\\
&=
\begin{pmatrix}
I_{i-1} & 0 & 0 & 0\\
0 & 1-t & t & 0\\
0 & 1 & 0 & 0\\
0 & 0 & 0 & I_{n-i-1}
\end{pmatrix}
\begin{pmatrix}
a_1\\
\vdots\\
a_n
\end{pmatrix}\\
&= 
U_i\overline{a}\\
&= 
\psi_n(\sigma_i)(\overline{a}).
\end{align*}
Hence, $\varphi_{\sigma_i} = \psi_n(\sigma_i)$
for all braid group generators $\sigma_i$. 
Consequently, since both $\varphi$ and $\psi_n$ are contravariant functors 
$\B_n \to \mathrm{GL}_n(\Lambda)$, it follows that $\varphi = \psi_n$.
\end{example}

\section{Switch Braid Quivers}\label{BB}

We begin this section with our main definition. Given a switch structure on a
set $X$, we will assign to each braid diagram $B$ a quiver (i.e.,
a directed multi-graph) with $|X|^n$ vertices, such that this assignment is 
invariant under the braid 
Reidemeister moves.

\begin{definition}
Let $X$ be a switch and $B$ an $n$-strand braid diagram. The \textit{switch
braid quiver} associated to $B$ and $X$, denoted $\mathcal{SQ}_X(B)$, has 
\begin{itemize}

\item vertex set $X^n$, and  

\item a directed edge from $\overline{x}=(x_1,\dots, x_n)$ to $\overline{y}=(y_1, \dots, y_n)$ 
if and only if $\varphi_B(\overline{x}) = \overline{y}$.
\end{itemize}
\end{definition}


Note that braid Reidemeister moves do not change the colors at the boundary 
(top and bottom rows) of a
switch-colored braid diagram. Hence, if $B,B'$ are braid diagrams related by a 
braid Reidemeister move, then 
\[\mathcal{SQ}_{X}(B) = \mathcal{SQ}_{X}(B').\] 
Consequently, we have the following result:

\begin{theorem}
For any switch $X$, the function 
\[\mathcal{SQ}_{X}: \{\emph{diagrams of $n$-strand braids}\} \to \{\emph{quivers}\}\] 
sending $B \mapsto \mathcal{SQ}_{X}(B)$ is an invariant of braids.
\end{theorem}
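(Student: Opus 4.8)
The plan is to reduce the statement to the already-established invariance of the map $\varphi$. The crucial observation is that the quiver $\mathcal{SQ}_X(B)$ is nothing more than a faithful encoding of the function $\varphi_B : X^n \to X^n$. Indeed, the vertex set of $\mathcal{SQ}_X(B)$ is the fixed set $X^n$, independent of $B$, and by definition the edge set consists of exactly the ordered pairs $(\overline{x}, \varphi_B(\overline{x}))$ as $\overline{x}$ ranges over $X^n$. Since $\varphi_B$ is a function, each vertex $\overline{x}$ emits exactly one directed edge, whose head is $\varphi_B(\overline{x})$; conversely, reading off the head of the unique out-edge at each vertex recovers $\varphi_B$. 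Thus the assignment $\varphi_B \mapsto \mathcal{SQ}_X(B)$ is a bijection between functions $X^n \to X^n$ and quivers on the labeled vertex set $X^n$ in which every vertex has out-degree one. In particular, for two braid diagrams $B, B'$ we have $\mathcal{SQ}_X(B) = \mathcal{SQ}_X(B')$ if and only if $\varphi_B = \varphi_{B'}$.

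With this reduction in hand, I would first observe that it suffices to treat the case in which $B$ and $B'$ differ by a single braid Reidemeister move, since any two diagrams representing the same braid are connected by a finite sequence of such moves and equality of quivers is transitive. For a single move, I invoke the earlier theorem asserting that $\varphi : \{\text{diagrams of $n$-strand braids}\} \to (X^n)^{X^n}$ is an invariant of braids; that is, $\varphi_B = \varphi_{B'}$ whenever $B$ and $B'$ are related by a braid Reidemeister move. Combining this with the equivalence established above immediately yields $\mathcal{SQ}_X(B) = \mathcal{SQ}_X(B')$.

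I expect no serious obstacle here: the substance of the result was essentially discharged when the invariance of $\varphi$ was proved, and what remains is the bookkeeping observation that the quiver loses no information relative to $\varphi_B$ and adds none. The one point deserving care is the precise sense in which the two quivers are \emph{equal}. Because the vertex set is the fixed labeled set $X^n$ (rather than an abstract isomorphism class), the conclusion is literal equality of quivers on a common vertex set, not merely equality up to graph isomorphism; this is exactly what is needed so that $\mathcal{SQ}_X$ descends to a well-defined function on the braid group $\B_n$. If one instead wished to regard quivers only up to isomorphism, the same argument applies a fortiori.
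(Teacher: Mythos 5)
Your proposal is correct and follows essentially the same route as the paper: the paper's justification is precisely that braid Reidemeister moves do not change the boundary colors (i.e., $\varphi_B=\varphi_{B'}$), from which equality of the quivers follows since $\mathcal{SQ}_X(B)$ is determined by $\varphi_B$ on the fixed vertex set $X^n$. Your additional remarks on the bijection between such quivers and functions $X^n\to X^n$ and on literal equality versus isomorphism are sound elaborations of the same argument.
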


\begin{remark}
We can now treat $\mathcal{SQ}_{X}$ as a function 
$\B_n \to \{\text{quivers}\}$.
\end{remark}

For each braid $B$, $\mathcal{SQ}_{X}(B)$ is a small category which determines
the switch braid counting invariant, and hence is a categorification.
Note that the subquiver of loop edges (edges for which the initial and 
terminal vertices are the same) in $\mathcal{SQ}_{X}(B)$ 
corresponds to colorings of the braid closure $\overline{B}$.


\begin{example}\label{ex1}
Consider the braid 
\[\includegraphics{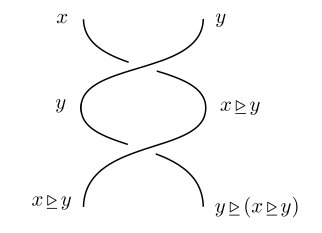}\] 
whose closure is the Hopf link and let $X$ be the birack (in fact, it is a biquandle) with operation tables
\[
\begin{array}{r|rrr}
\utr & 1 & 2 & 3 \\ \hline
1 & 1 & 3 & 2 \\
2 & 3 & 2 & 1 \\
3 & 2 & 1 & 3
\end{array}
\quad 
\begin{array}{r|rrr}
\otr & 1 & 2 & 3 \\ \hline
1 & 1 & 1 & 1 \\
2 & 2 & 2 & 2 \\
3 & 3 & 3 & 3
\end{array}
\]
The set $X^2$ has $|X|^2=9$ elements; we compute that the pair $(x,y)$ is sent
by the braid to the element 
\[(x\utr y,y\utr(x\utr y))\]
and thus obtain switch braid quiver
\[\includegraphics{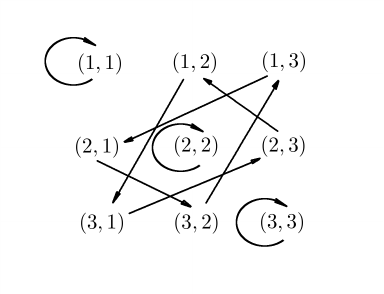}\] 
\end{example}

%
%
%
%

Since $\varphi_B$ is a bijection for every $B$, we have:

\begin{theorem}
For any braid $B$ and finite switch $X$, the switch braid quiver 
$\mathcal{SQ}_{X}(B)$ decomposes into disjoint directed cycles. 
\end{theorem}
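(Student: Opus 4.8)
The plan is to argue that each vertex of $\mathcal{SQ}_X(B)$ has out-degree exactly one and in-degree exactly one, which is precisely the combinatorial condition characterizing a disjoint union of directed cycles (including length-one loops). Since the edge set is defined by $\overline{x} \to \varphi_B(\overline{x})$, the out-degree of every vertex is automatically one: each $\overline{x} \in X^n$ determines a single image $\varphi_B(\overline{x}) = \overline{x}\cdot B$, so exactly one edge leaves $\overline{x}$. First I would record this observation, citing the well-definedness of the coloring map established earlier in Section \ref{SBR}.

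Next I would handle the in-degree. The in-degree of a vertex $\overline{y}$ counts the number of $\overline{x}$ with $\varphi_B(\overline{x}) = \overline{y}$, i.e. the cardinality of the fiber $\varphi_B^{-1}(\overline{y})$. The key input is that $\varphi_B$ is a \emph{bijection} of the finite set $X^n$, which is exactly the hypothesis flagged in the sentence immediately preceding the statement (``Since $\varphi_B$ is a bijection for every $B$''). Because $\varphi_B$ is injective, each fiber has at most one element; because it is surjective onto the finite set $X^n$, each fiber has at least one element. Hence every vertex has in-degree exactly one as well.

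Finally I would invoke the standard graph-theoretic fact that a finite directed graph in which every vertex has both in-degree and out-degree equal to one decomposes uniquely into vertex-disjoint directed cycles. Concretely, one can phrase this via the permutation $\varphi_B \in \Sym(X^n)$: the quiver $\mathcal{SQ}_X(B)$ is exactly the functional graph of $\varphi_B$, whose connected components are the cycles of $\varphi_B$ in its disjoint-cycle decomposition. Iterating $\overline{x} \mapsto \varphi_B(\overline{x})$ from any starting vertex must eventually return to $\overline{x}$ by finiteness and injectivity, tracing out one such cycle, and distinct cycles are disjoint since in-degree one forbids any vertex from lying on two incoming edges.

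I do not anticipate a genuine obstacle here: the entire content is the reduction of ``directed graph decomposes into disjoint cycles'' to ``every vertex has in- and out-degree one,'' which in turn follows immediately from $\varphi_B$ being a bijection on a finite set. The only point requiring a word of care is making sure loops (fixed points of $\varphi_B$, corresponding to colorings of the closure $\overline{B}$) are counted as degenerate directed cycles of length one, so that the statement holds without exception.
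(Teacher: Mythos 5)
Your proposal is correct and matches the paper's argument, which simply observes that $\varphi_B$ is a bijection of the finite set $X^n$ so that $\mathcal{SQ}_X(B)$ is its functional graph and hence the disjoint union of the cycles of the permutation $\varphi_B$. You have merely spelled out the in-degree/out-degree bookkeeping that the paper leaves implicit, including the correct treatment of loops as length-one cycles.
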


For each vertex $v$ in $\mathcal{SQ}_{X}(B)$, let $\mathcal{L}(v)$ denote the 
length of the directed cycle containing $v$. 
Then we make the following 
definition:

\begin{definition}
Let $X$ be a finite switch and $B$ a braid diagram. We define the 
\textit{switch braid quiver polynomial} of $B$ with respect to $X$ to be
the polynomial obtained by summing over the 
vertices $v$ in $\mathcal{SQ}_X(B)$:
\[\Phi_X^{C}(B)=\sum_{v\in \mathcal{SQ}_X(B)} u^{\mathcal{L}(v)}.\]
\end{definition}

Since the switch braid quiver is an invariant of braids, it follows that:

\begin{corollary}
The switch braid quiver polynomial is an invariant of braids.
\end{corollary}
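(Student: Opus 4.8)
The plan is to deduce this immediately from the braid-invariance of the switch braid quiver itself, which the preceding theorem has already established. Recall that $\mathcal{SQ}_X$ descends to a well-defined function $\B_n \to \{\text{quivers}\}$: whenever braid diagrams $B$ and $B'$ are related by braid Reidemeister moves, we have the equality $\mathcal{SQ}_X(B) = \mathcal{SQ}_X(B')$ of quivers on the common vertex set $X^n$. My strategy is then simply to show that $\Phi_X^C(B)$ is extracted from $\mathcal{SQ}_X(B)$ by a procedure that uses no data of $B$ beyond the quiver, so that invariance is inherited automatically.

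First I would invoke the theorem guaranteeing that $\mathcal{SQ}_X(B)$ decomposes into disjoint directed cycles—valid because $\varphi_B$ is a bijection of the finite set $X^n$—to conclude that the assignment $v \mapsto \mathcal{L}(v)$ is a well-defined function on $X^n$ depending only on the edge set of $\mathcal{SQ}_X(B)$. Consequently the defining sum $\Phi_X^C(B) = \sum_{v\in \mathcal{SQ}_X(B)} u^{\mathcal{L}(v)}$ is a function of the quiver alone. Then, given $B$ and $B'$ related by braid Reidemeister moves, the equality $\mathcal{SQ}_X(B) = \mathcal{SQ}_X(B')$ forces identical vertex sets, identical edge sets, identical cycle decompositions, and hence identical cycle-length functions $\mathcal{L}$, from which $\Phi_X^C(B) = \Phi_X^C(B')$ follows at once.

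I expect essentially no obstacle here: the genuine content has already been absorbed into the invariance of $\mathcal{SQ}_X$ and the bijectivity of $\varphi_B$. The only point deserving a moment's care is confirming that $\mathcal{L}$ is intrinsic to the quiver, i.e. that two diagrams yielding the same quiver assign the same cycle length to each vertex; this is immediate, since the directed cycles are a purely graph-theoretic feature of the (equal) edge sets, with no reference to the underlying diagrams. Thus the corollary reduces to a one-line consequence of the invariance theorem together with the disjoint-cycle decomposition, and the write-up need only make explicit that $\Phi_X^C$ factors through $\mathcal{SQ}_X$.
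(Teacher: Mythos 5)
Your proposal is correct and matches the paper's argument: the paper likewise derives the corollary immediately from the invariance of $\mathcal{SQ}_X$, since $\Phi_X^C$ is computed purely from the quiver's cycle decomposition. Your extra remark that $\mathcal{L}$ is intrinsic to the edge set is a reasonable elaboration of the same one-line deduction.
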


\begin{example}\label{ex2}
The braid in Example \ref{ex1} has switch braid quiver invariant
value $\Phi_X^{C}(B)=6u^3+3u$ with respect to the birack in the example.
\end{example}

Since a coloring with switch braid quiver cycle length 1 is a coloring
of the braid closure, we obtain:

\begin{corollary}
When $X$ is a birack, the coefficient of the linear term 
$u$ in $\Phi_X^{C}(B)$ is the birack
counting invariant of the braid closure of $B$, 
namely $\Phi_X^{\mathbb{Z}}\left(\overline{B}\right)$.
\end{corollary}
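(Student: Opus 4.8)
The plan is to show that the coefficient of $u$ in $\Phi_X^C(B)$ counts exactly the fixed points of $\varphi_B$, and then to exhibit a bijection between these fixed points and the birack colorings of the closure $\overline{B}$.

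First I would unwind the definition of the polynomial. Since $\Phi_X^C(B) = \sum_{v} u^{\mathcal{L}(v)}$, the coefficient of the linear term $u$ is the number of vertices $v = \overline{x}$ with $\mathcal{L}(v) = 1$. Because $\varphi_B$ is a bijection, the quiver decomposes into disjoint directed cycles, and a vertex lies in a cycle of length $1$ precisely when it carries a loop edge, i.e. when $\varphi_B(\overline{x}) = \overline{x}$. Thus the coefficient of $u$ equals the number of fixed points of $\varphi_B$.

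Next I would establish the bijection between fixed points and colorings of the closure. A color vector $\overline{x}$ at the top of $B$ determines a unique switch coloring of the whole diagram, with bottom vector $\overline{x}\cdot B = \varphi_B(\overline{x})$. The closure $\overline{B}$ is formed by joining the $i$-th bottom endpoint to the $i$-th top endpoint for each $i$, so a switch coloring of $B$ descends to a valid switch coloring of $\overline{B}$ exactly when the colors agree across each join, namely when $\overline{x} = \varphi_B(\overline{x})$. This yields a bijection between fixed points of $\varphi_B$ and switch colorings of $\overline{B}$: a fixed point glues up to a closure coloring, and conversely cutting a closure coloring open along the top/bottom row recovers a fixed top vector. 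Finally, since $X$ is a birack, Remark \ref{birack_is_a_switch} realizes it as a switch whose crossing relation is equivalent to the birack crossing relation, so these switch colorings of $\overline{B}$ coincide with the $X$-colorings in $\mathcal{C}(\overline{B},X)$, of cardinality $\Phi_X^{\mathbb{Z}}(\overline{B})$. Chaining the three identifications gives the claim.

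The step I expect to require the most care is the bijection in the middle: one must check that the gluing and cutting operations are mutually inverse and respect the coloring condition at every crossing, not merely at the join, and that the labeling of strand endpoints under closure matches the indexing of coordinates in $X^n$, so that the fixed-point condition $\varphi_B(\overline{x}) = \overline{x}$ is genuinely equivalent to closure-compatibility of the coloring.
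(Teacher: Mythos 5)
Your argument is correct and follows the same route the paper takes: the paper's justification is the one-line observation that a coloring with cycle length $1$ (equivalently, a fixed point of $\varphi_B$) is exactly a coloring of the braid closure, which you have simply spelled out in more detail, including the identification of the birack with its associated switch. No gaps; the extra care about gluing and indexing is exactly the standard verification the paper leaves implicit.
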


%
%
%

This last corollary poses a natural question: What is the meaning of the 
other coefficients? 
It is not difficult to see that 
a coloring with cycle length $k$ is a coloring of 
the closure of $B^k$, the $k$th power of $B$ in the braid group; hence we have
the more general result:

\begin{corollary}
When $X$ is a birack, the coefficient of the term $u^k$ in $\Phi_X^{C}(B)$ is the birack 
counting invariant of the braid closure of $B^k$, namely 
$\Phi_X^{\mathbb{Z}}\left(\overline{B^k}\right)$.
\end{corollary}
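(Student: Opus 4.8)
The plan is to reduce the statement to a fixed-point count and then read that count off from the cycle decomposition of $\varphi_B$. First I would record the functorial identity $\varphi_{B^k} = \varphi_B^k$: since $\varphi:\B_n \to \Sym(X^n)$ is a contravariant functor and all $k$ factors of $B^k$ are equal, contravariance introduces no reordering, so $\varphi(B^k) = \varphi(B)^k$. Next I would invoke the standard dictionary between colorings of a braid closure and fixed points of the induced permutation: a top color vector $\overline{x}\in X^n$ extends to an $X$-coloring of the closed braid $\overline{B^k}$ exactly when the induced bottom vector agrees with the top one, i.e. when $\overline{x}\cdot B^k = \varphi_B^k(\overline{x}) = \overline{x}$. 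This gives
\[
\Phi_X^{\mathbb{Z}}\!\left(\overline{B^k}\right) = \bigl|\mathcal{C}\!\left(\overline{B^k},X\right)\bigr| = \bigl|\{\overline{x}\in X^n : \varphi_B^k(\overline{x}) = \overline{x}\}\bigr|.
\]

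Next I would connect this fixed-point set to the quiver $\mathcal{SQ}_X(B)$. Because $\varphi_B$ is a bijection, its quiver decomposes into disjoint directed cycles, and the coefficient of $u^k$ in $\Phi_X^C(B) = \sum_{v} u^{\mathcal{L}(v)}$ is by definition the number of vertices $v$ with $\mathcal{L}(v) = k$. To match this with the fixed-point count I would make precise the observation recorded just before the statement, namely that a coloring of cycle length $k$ is a coloring of $\overline{B^k}$. Concretely, I would set up the assignment sending a vertex $v$ to the $X$-coloring of $\overline{B^k}$ obtained by coloring the top of $B^k$ with $v$, verify that it lands in $\mathcal{C}(\overline{B^k},X)$, and then establish that it is a bijection onto this coloring set. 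Reading the coefficient of $u^k$ as $\#\{v : \mathcal{L}(v)=k\}$ and transporting along this bijection would then yield the claimed equality.

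The crux of the argument, and the step I expect to be the main obstacle, is pinning down exactly which vertices of $\mathcal{SQ}_X(B)$ the colorings of $\overline{B^k}$ correspond to. A vertex $v$ satisfies $\varphi_B^k(v) = v$ precisely when $\mathcal{L}(v)$ divides $k$, so the fixed-point set of $\varphi_B^k$ is organized by the divisors of $k$; the delicate part is therefore the cycle-decomposition bookkeeping that isolates the contribution indexed by $u^k$ and identifies it with $\mathcal{C}(\overline{B^k},X)$. Essentially all of the content lies in making this correspondence exact, with the functorial identity $\varphi_{B^k}=\varphi_B^k$ and the closure dictionary serving only as setup. Once the bijection between cycle-length-$k$ vertices and colorings of $\overline{B^k}$ is secured, equating $\#\{v:\mathcal{L}(v)=k\}$ with $\bigl|\mathcal{C}(\overline{B^k},X)\bigr| = \Phi_X^{\mathbb{Z}}(\overline{B^k})$ completes the proof.
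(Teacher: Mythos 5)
Your reduction is set up correctly, and in fact more carefully than the paper's own one-line justification, but the step you defer to the end --- establishing that the assignment from $\{v : \mathcal{L}(v)=k\}$ to $\mathcal{C}\left(\overline{B^k},X\right)$ is a \emph{bijection} --- is not delicate bookkeeping that can be ``secured'': it is false in general, and your own intermediate observation shows why. As you note, $\varphi_B^k(v)=v$ precisely when $\mathcal{L}(v)$ divides $k$, so your map is an injection whose image omits every coloring of $\overline{B^k}$ coming from a vertex whose cycle length is a \emph{proper} divisor of $k$ (for instance, every coloring of $\overline{B}$ itself recurs as a coloring of $\overline{B^k}$ for all $k$). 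What your first two steps --- the identity $\varphi_{B^k}=\varphi_B^k$ and the closure/fixed-point dictionary --- actually prove is the divisor-sum identity
\[
\Phi_X^{\mathbb{Z}}\left(\overline{B^k}\right)=\sum_{d\mid k}c_d,
\qquad\text{equivalently}\qquad
c_k=\sum_{d\mid k}\mu(k/d)\,\Phi_X^{\mathbb{Z}}\left(\overline{B^d}\right)
\]
by M\"obius inversion, where $c_d$ denotes the coefficient of $u^d$ in $\Phi_X^C(B)$. The corollary as stated coincides with this only when $c_d=0$ for every proper divisor $d$ of $k$; the case $k=1$ (the preceding corollary in the paper) is the only unconditional instance.

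You should know that the gap you ran into is not yours alone: the paper's entire ``proof'' is the remark that ``a coloring with cycle length $k$ is a coloring of the closure of $B^k$,'' which is only the injectivity half of the needed correspondence, and the corollary as printed is actually false --- the paper's own Examples \ref{ex1} and \ref{ex2} refute it. There, identifying $X=\{1,2,3\}$ with $\mathbb{Z}_3$ so that $x\utr y=-x-y$ and $x\otr y=x$, one computes $\varphi_B(x,y)=(x\utr y,\,y\utr(x\utr y))=(-x-y,\,x)$, whence $\varphi_B^3=\mathrm{Id}_{X^2}$; thus $\overline{B^3}$, the $(6,2)$-torus link, has all $9=3+6$ colorings by $X$, while the coefficient of $u^3$ in $\Phi_X^C(B)=6u^3+3u$ is $6$ (this matches the M\"obius formula: $\mu(3)\cdot 3+\mu(1)\cdot 9=6$). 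So the correct move is not to hunt for the missing bijection but to amend the statement: the coefficient $c_k$ counts the colorings of $\overline{B^k}$ of \emph{primitive period} $k$, i.e.\ those not induced by colorings of $\overline{B^d}$ for proper divisors $d\mid k$, and the counting invariant of $\overline{B^k}$ is recovered as the sum $\sum_{d\mid k}c_d$. Your fixed-point analysis is exactly the right tool; its conclusion simply contradicts the target rather than proving it.
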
 

\begin{example} 
The invariant value $\Phi_X^{C}(B)=6u^3+3u$ from example \ref{ex2} says that 
closure of the braid in Example \ref{ex1}, i.e. the Hopf link, has three 
colorings by the birack $X$ in the example, distinguishing it from the 
two-component unlink which has nine. Moreover, it also says that the closure 
of the third power of the braid, i.e. the $(6,2)$-torus link, has six colorings 
by $X$. 
\end{example}

\section{Switch Braid Quiver Cocycle Enhancements}\label{BCQ}

The meanings of the individual terms of the switch braid quiver 
polynomial all have been identified. However, like the counting invariants
from which the polynomial arises, this invariant can be enhanced in various 
ways. In this section we will consider enhancements via birack 2-cocycles,
leaving further enhancements for future work.

Let $X$ be a finite birack and $A$ an abelian group. Recall (see \cite{EN}
for more detail) that the group of \textit{birack $k$-chains} is
the $A$-module generated by ordered $k$-tuples of elements of $X$, i.e.
$C_k(X;A)=A[X^k]$. The map $\partial_k:C_k(X;A)\to C_{k-1}(X;A)$ defined on
generators by
\[\partial_k(x_1,\dots, x_k)
=\sum_{j=1}^k(-1)^j[(x_1,\dots, x_{j-1},x_{j+1},\dots, x_k) -
(x_1\utr x_j,\dots, x_{j-1}\utr x_j,x_{j+1}\otr x_j,\dots, x_k\otr x_j)]\]
and extended by linearity is known as the \textit{birack boundary map}. 
For each $k$ we have the groups of \textit{boundaries} 
\[B_{k+1}(X;A)=\mathrm{Im}\,\partial_{k+1}\subset C_k(X;A)\]
and \textit{cycles}
\[Z_k(X;A)=\mathrm{Ker}\,\partial_k\subset C_k(X;A)\]
with the quotient group giving us the \textit{$k$th birack homology} 
\[H_k(X;A)=Z_k(X;A)/B_{k+1}(X;A).\]
Dualizing yields the corresponding cohomology modules, 
$H^k(X;A)=Z^k(X;A)/B^{k-1}(X;A)$.

Given an element of $Z^2(X;A)$ representing a cohomology class in $H^2(X;A)$, 
we collect contributions from the crossings in a birack coloring of $B$ given by
\[\includegraphics{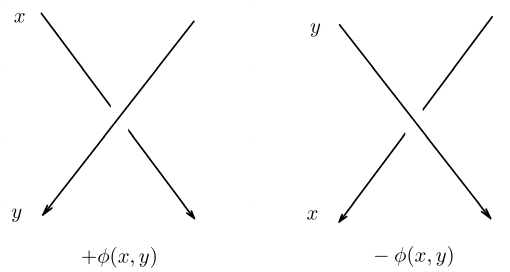}\]
to obtain the \textit{Boltzmann weight} $BW(v)$ of the coloring vector $v \in X^n$
(recalling that the vector of colors at the top determines the entire coloring). 
It is a standard observation (see \cite{EN} for details) that this Boltzmann
weight is unchanged by Reidemeister II and III moves and hence is a braid
invariant. Moreover, cohomologous cocycles define the same invariant.
We can thus enhance the switch braid quiver $\mathcal{SQ}_{X}(B)$ by selecting
a birack 2-cocycle $\phi\in H^2(X;A)$ for a coefficient group $A$ and
assigning a weight of $BW(v)$ to each vertex (a.k.a. coloring vector) 
in $\mathcal{SQ}_{X}(B)$.
More formally, we have:

\begin{definition}
Let $B$ be a braid diagram, $X$ a finite birack and $A$ an abelian group. For each 
element $\phi\in H^2(X;A)$, we define the \textit{birack braid cocycle quiver}
$\mathcal{BCQ}_X^{\phi}(B)$
to be the switch braid quiver $\mathcal{SQ}_{X}(B)$ with vertices weighted
with the Boltzmann weights: 
\[BW(v)=\sum_{c\in B}(-1)^{\epsilon(c)}\phi(x(c),y(c))\]
where we sum over crossings $c$ in $B$, with
$\epsilon(c)$ denoting the sign of crossing $c$ and $(x(c),y(c))$
denoting the tuple of colors on the left of crossing $c$ induced by the 
color vector $v$, as in the diagram above.
\end{definition}

%
%
%
%

By construction, we have:

\begin{theorem}
For any finite birack $X$, abelian group $A$ and birack 2-cocycle $\phi\in H^2(X;A)$, the function 
\[\mathcal{BCQ}_{X}^{\phi}: \{\emph{diagrams of $n$-strand braids}\} \to \{\emph{vertex-weighted quivers}\}\] 
sending $B \mapsto \mathcal{BCQ}_X^{\phi}(B)$ is an invariant of braids.
\end{theorem}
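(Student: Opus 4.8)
The plan is to show that $\mathcal{BCQ}_X^{\phi}$ is unchanged by each braid Reidemeister move, by combining two facts established earlier in the paper with the cocycle invariance of the Boltzmann weight. The underlying uncolored quiver $\mathcal{SQ}_X(B)$ is already known to be a braid invariant, so what remains is to verify that the new data—the vertex weights $BW(v)$—are also preserved. Since a vertex-weighted quiver is determined by its underlying quiver together with the weight function on vertices, it suffices to check that the weight of each coloring vector $v \in X^n$ is the same before and after a move.

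First I would recall that braid Reidemeister moves (the direct Reidemeister II moves and the Reidemeister III moves) fix the colors on the top row of the diagram, and that the top color vector $v$ determines the entire birack coloring. Thus a move sets up a canonical correspondence between colorings of $B$ and colorings of $B'$: the coloring of $B$ with top vector $v$ corresponds to the coloring of $B'$ with the same top vector $v$. Under this correspondence the vertex sets $X^n$ of $\mathcal{SQ}_X(B)$ and $\mathcal{SQ}_X(B')$ are identified, and (by the earlier theorem that $\mathcal{SQ}_X(B) = \mathcal{SQ}_X(B')$) the edge sets agree as well. So the only thing left is to compare $BW(v)$ computed in $B$ against $BW(v)$ computed in $B'$.

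For this I would invoke the standard cocycle-invariance argument. The Boltzmann weight $BW(v) = \sum_{c \in B} (-1)^{\epsilon(c)} \phi(x(c), y(c))$ is a sum of local crossing contributions, and $\phi \in Z^2(X;A)$ being a $2$-cocycle means exactly that the total contribution is unchanged by the Reidemeister II and III moves; the direct Reidemeister II move contributes a canceling pair of crossings of opposite sign whose weights cancel, while the cocycle condition $\partial^* \phi = 0$ forces the three-crossing contributions on the two sides of a Reidemeister III move to agree. Since the braid Reidemeister moves are a subset of the Reidemeister II and III moves, and this invariance is already cited from \cite{EN}, the weight $BW(v)$ of each coloring vector is preserved. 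Combining this with the identification of the underlying quivers yields $\mathcal{BCQ}_X^{\phi}(B) = \mathcal{BCQ}_X^{\phi}(B')$, so the function descends to a well-defined invariant on $\B_n$.

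The main obstacle—though it is more bookkeeping than genuine difficulty—is ensuring that the vertex-to-vertex identification coming from ``same top color vector'' is genuinely consistent with both the quiver isomorphism and the weight comparison simultaneously, rather than checking each in isolation. In particular one must confirm that the crossing-to-crossing matching used in the cocycle argument is compatible with the top-vector labeling of vertices, so that a fixed vertex $v$ really does receive equal weights on the two sides. Since all three ingredients (top colors fix the coloring, $\mathcal{SQ}_X$ is invariant, and $BW$ is cocycle-invariant) are keyed to the same parametrization by top color vectors, this compatibility is automatic, and the theorem follows as asserted ``by construction.''
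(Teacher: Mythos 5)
Your proposal is correct and follows essentially the same route as the paper, which simply asserts the theorem ``by construction'' after noting that the underlying quiver $\mathcal{SQ}_X(B)$ is already a braid invariant and that the Boltzmann weight is unchanged by Reidemeister II and III moves (citing the standard argument). Your more detailed bookkeeping of the vertex identification via top color vectors is a faithful elaboration of the same argument.
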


\begin{remark}
We can now treat $\mathcal{BCQ}_{X}^{\phi}$ as a map $\B_n \to  \{\text{vertex-weighted quivers}\}$.
\end{remark}

\begin{example}\label{ex4}
Let $X=\{1,2,3\}$ have the constant action birack structure given by 
$\sigma=\tau=(13)$ and let $\phi:X\times X\to \mathbb{Z}_5$ be given by 
\[\phi(x,y)=\chi_{(1,2)}+4\chi_{(1,3)}+3\chi_{(2,1)}+2\chi_{(2,3)}+\chi_{(3,1)}+2\chi_{(3,2)}.\] The reader can verify that $\phi$ is a birack 2-cocycle.
Then, for example, the $X$-coloring of the braid $B=\sigma_1^2\in B_2$ 
determined by the top row vector $(2,3)$ shown has Boltzmann weight 
$\phi(2,1)+\phi(1,2)+\phi(2,1)= 3+1+3=2$:
\[\includegraphics{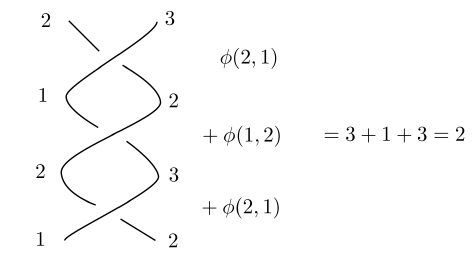}\]
Repeating over the set $X^2$, $B$ has birack braid cocycle quiver
\[\includegraphics{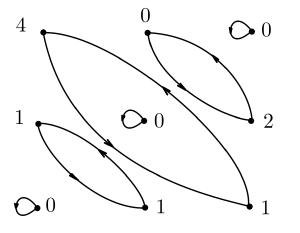}\]
\end{example}

\begin{definition}
For any braid diagram $B$, finite birack $X$, abelian group $A$ and birack 2-cocycle $\phi\in C^2(X;A)$, 
make the following definitions.
\begin{itemize}
\item The \textit{birack braid cocycle quiver polynomial} 
$\Phi_X^{\mathcal{BCQ},\phi}(B)$ is the sum over the vertices $v$ in 
$\mathcal{BCQ}_X^\phi(B)$: 
\[\Phi_X^{\mathcal{BCQ},\phi}(B)=\sum_{v\in \mathcal{BCQ}_X^\phi(B)}u^{\mathcal{L}(v)}v^{BW(v)}\]
where the terms encode the cycle length and Boltzmann weight of each vertex. 
\item The \textit{birack braid cocycle 2-variable polynomial} is the
sum over the edges $e$ in $\mathcal{BCQ}_X^\phi(B)$:
\[\Phi_X^{\phi,2}(B)=\sum_{e\in \mathcal{BCQ}_X^\phi(B)}s^{BW(s(e))}t^{BW(t(e))}\]
where
$BW(s(e))$ is the Boltzmann weight of the source vertex of $e$ and
$BW(t(e))$ is the Boltzmann weight of the target vertex of $e$.
\end{itemize}
\end{definition}

Since both polynomials are decategorifications of the braid invariant
$\mathcal{BCQ}_X^{\phi}$, we have:

\begin{corollary}
Both polynomials $\Phi_X^{\mathcal{BCQ},\phi}$ and $\Phi_X^{\phi,2}$
are invariants of braids.
\end{corollary}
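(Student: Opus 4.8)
The plan is to deduce both statements from the already-established fact that the vertex-weighted quiver $\mathcal{BCQ}_X^{\phi}$ is itself a braid invariant. Concretely, if $B$ and $B'$ are $n$-strand braid diagrams related by a braid Reidemeister move, the preceding theorem yields an equality $\mathcal{BCQ}_X^{\phi}(B)=\mathcal{BCQ}_X^{\phi}(B')$ of vertex-weighted quivers: they share the common vertex set $X^n$, the same edge set (since $\varphi_B=\varphi_{B'}$ as maps $X^n\to X^n$), and the same vertex weights $BW(v)$ (since the Boltzmann weights are preserved by the move). So it suffices to observe that each of the two polynomials is a function of this vertex-weighted quiver alone, i.e. that each factors through $\mathcal{BCQ}_X^{\phi}$.

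First I would treat $\Phi_X^{\mathcal{BCQ},\phi}$. Its defining sum ranges over the vertices $v$ of $\mathcal{BCQ}_X^{\phi}(B)$ and records the two quantities $\mathcal{L}(v)$ and $BW(v)$. The weight $BW(v)$ is part of the quiver's decoration, so it is manifestly intrinsic; and $\mathcal{L}(v)$, the length of the directed cycle through $v$, is well-defined and intrinsic because, by the earlier theorem that the switch braid quiver decomposes into disjoint directed cycles, every vertex lies in a unique such cycle. Hence $\Phi_X^{\mathcal{BCQ},\phi}(B)$ depends only on $\mathcal{BCQ}_X^{\phi}(B)$, and the equality of quivers above forces $\Phi_X^{\mathcal{BCQ},\phi}(B)=\Phi_X^{\mathcal{BCQ},\phi}(B')$.

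The argument for $\Phi_X^{\phi,2}$ is entirely parallel, the only change being that its defining sum ranges over edges rather than vertices: each edge $e$ contributes $s^{BW(s(e))}t^{BW(t(e))}$, and both the edge set and the source/target incidence, together with the vertex weights, are part of the data of the vertex-weighted quiver. So $\Phi_X^{\phi,2}$ likewise factors through $\mathcal{BCQ}_X^{\phi}$, and the same equality of quivers yields $\Phi_X^{\phi,2}(B)=\Phi_X^{\phi,2}(B')$.

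There is no real obstacle here: this is a decategorification corollary, and its content is simply that both polynomials are extracted functorially from the invariant vertex-weighted quiver. The only point requiring a word of care is the well-definedness of $\mathcal{L}(v)$, which I would justify by citing the disjoint-cycle decomposition; everything else is immediate from the definitions once the equality of vertex-weighted quivers is in hand.
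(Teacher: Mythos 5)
Your proposal is correct and matches the paper's argument, which simply notes that both polynomials are decategorifications of (i.e., computed entirely from) the invariant vertex-weighted quiver $\mathcal{BCQ}_X^{\phi}$. You spell out the factoring-through argument and the well-definedness of $\mathcal{L}(v)$ in more detail than the paper does, but the underlying reasoning is the same.
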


\begin{example}
The braid in Example \ref{ex4} has 
\[\Phi_X^{\mathcal{BCQ},\phi}(B)=u^2v^4+u^2v^2+3u^2v+u^2+3u\]
and
\[\Phi_X^{\phi,2}(B)=s^4t+s^2+st^4+2st+t^2+3.\]
\end{example}

\section{Questions}\label{Q}

We conclude with some questions for future research.

\begin{itemize}
\item What other enhancements of $\Phi_{X}^{C}$ are possible?
\item Beyond replacing with biracks with biquandles, what strategies 
are needed to obtain invariants of knots and link from these invariants 
of braids?
\item Is there a natural notion of an inner automorphism of a switch?
If so, which braids $B$ make $\varphi_B$ an inner automorphism of $X^n$?
\end{itemize}


\bibliography{mch-sn}{}
\bibliographystyle{abbrv}

\bigskip

\noindent
\textsc{Department of Mathematics \\
Harvey Mudd College\\
301 Platt Boulevard \\
Claremont, CA 91711
}

\bigskip

\noindent
\textsc{Department of Mathematical Sciences \\
Claremont McKenna College \\
850 Columbia Ave. \\
Claremont, CA 91711}

\end{document}